\documentclass[11pt]{amsart}
\usepackage{amssymb,latexsym,amsmath}
\usepackage{graphics}                 
\usepackage{color}                    
\usepackage{hyperref}                 
\usepackage[all]{xy}
\usepackage[textheight=615pt, textwidth=360pt]{geometry}
\begin{document}

\newtheorem{definition}{Definition}
\newtheorem{theorem}{Theorem}[section]
\newtheorem{proposition}[theorem]{Proposition}
\newtheorem{lemma}[theorem]{Lemma}
\newtheorem{corollary}[theorem]{Corollary}
\newtheorem{question}[theorem]{Question}
\newtheorem{remark}[theorem]{Remark}
\newtheorem{example}[theorem]{Example}
\newtheorem{conjecture}[theorem]{Conjecture}

\newtheorem{correction}{Correction}

\def\A{{\mathbb{A}}}
\def\C{{\mathbb{C}}}

\def\L{{\mathbb{L}}}
\def\O{{\mathcal{O}}}
\def\P{{\mathbb{P}}}
\def\Q{{\mathbb{Q}}}
\def\Z{{\mathbb{Z}}}
\def\Ch{{\rm Ch}}
\def\p{{\mathbf{p}}}

\def\sp{{\rm SP}}
\def\rank{{\rm rank}}

\def\mC{{\mathcal{C}}}
\def\cZ{{\mathcal{Z}}}
\def\D{{\mathcal{D}}}

\title{Chow groups of Chow varieties }

\subjclass[2020]{Primary 14C05,  14C15, 14C25}
\date{\today}

\keywords{Algebraic cycle; Chow group; Chow variety}



\author[Y. Chen]{Youming Chen }
\address{School of Science, Chongqing University of Technology, Chongqing 400054, P.R. China}%
\email{youmingchen@cqut.edu.cn}%

\author[W. Hu]{Wenchuan Hu}
\address{School of Mathematics, Sichuan University, Chengdu 610064, P.R. China}%
\email{huwenchuan@gmail.com}%


\begin{abstract}
 Let $C_{p,d}(\mathbb{P}^n)$ be the Chow variety  of effective algebraic $p$-cycles of degree $d$ in complex projective $n$-space $\mathbb{P}^n$. In this paper, we compute the rational Chow groups  $\mathrm{Ch}_q(C_{p,d}(\mathbb{P}^n))_\mathbb{Q}$ for $0 \le q \le d$. We show that these Chow groups are isomorphic to the corresponding rational singular homology groups $H_{2q}(C_{p,d}(\mathbb{P}^n), \mathbb{Q})$ in this range, a result that was previously known.  Furthermore, we prove that the rational Chow groups of a natural completion of the Chow monoid of algebraic $p$-cycles on projective spaces coincide with the corresponding rational singular homology groups. We also establish the stability of Chow groups of Chow varieties under natural embeddings and algebraic suspension maps within a certain range. Finally, we determine the Chow groups, up to a certain level, for the space of algebraic cycles of fixed degree.
\end{abstract}

\maketitle

\tableofcontents

\section{Introduction}

Let $\P^n$ be the  complex projective space of dimension $n$.
The Chow variety $C_{p,d}(\P^n)$
is the space of effective algebraic $p$-cycles of degree $d$ on $\P^n$.
As the moduli space of effective algebraic cycles
of a projective algebraic variety of suitable dimension and degree,
the importance of understanding the structure of Chow varieties is obvious.
It relates to many deep problems in  algebraic geometry such as the Hodge Conjecture.
The Chow variety $C_{p,d}(\P^n)$ plays a key role in the theory of  algebraic cocycles
or the morphic cohomology theory introduced by Friedlander and Lawson (see \cite{FL}).
Conjectured by Grothendieck(see \cite{Grothendieck})  and Friedlander-Mazur (see \cite[Chap. 7]{Friedlander-Mazur}), the filtration on the singular cohomology in rational coefficients  of a smooth complex projective  variety
given by the cycle class map of the morphic cohomology groups
 coincides with the corresponding Hodge filtration with  the maximal sub-Hodge structure.

Over the past decades,
much serious thought has been devoted to understanding the structure of Chow varieties.
The first important fact proved by Chow and Van der Waerden is that
$C_{p,d}(\P^n)$ canonically  carries the structure of a closed complex projective algebraic set (see \cite{Chow-Waerden}, \cite{GKZ}, \cite{Kollar}).
Therefore, it naturally admits the structure of a compact Hausdorff space.
Naturally, the most interesting structure of $C_{p,d}(\P^n)$ lies in topological and algebraic aspects.

On the topological aspect,
the topological structure of $C_{p,d}(\P^n)$ for
large $p$, $n$ and $d$  is  quite complicated.
For example, it is nontrivial to identify
the dimension of $C_{p,d}(\P^n)$ (see \cite{Eisenbud-Harris}, \cite{Lehmann}).
One can see easily that $C_{p,d}(\P^n)$ is a path-connected topology space,
but the proof of its simply connectedness (see \cite{Horrocks,Fujiki,Lawson1}),
or the computations of other topological invariants such as higher homotopy groups,
(singular) homology groups (see \cite{Lawson1,Hu-Deg2,Hu-2015}),
and Euler characteristic (see \cite{Lawson-Yau, Hu-2013}), etc.,
are quite difficult.
The complete identification of  higher homotopy groups or
homology groups for $C_{p,d}(\P^n)$  is still open.
A short survey  of the structure of Chow varieties and open questions can be found in \cite{Hu-2021}
and references therein.

On the algebraic  aspect,  the algebraic structure of $C_{p,d}(\P^n)$ as a projective algebraic set
seems to be much more complicated.
In general, $C_{p,d}(\P^n)$ is singular and non-irreducible,
and  even the number of irreducible components is unknown for $p=1,n=3$ and $d$ large.
Shafarevich once asked a question that whether each
irreducible component of $C_{p,d}(\P^n)$  is rational (see \cite{Shafarevich}) and the answer is negative (see \cite{Hu-2021}).

Due to the complexity of the algebraic structure of Chow varieties,
very little progress has been made in this aspect.
As one of the most important  algebraic invariants of an algebraic variety or scheme,
the structure of its Chow groups is the main target in the algebraic cycles theory.
In this direction, the Chow group $\Ch_0(C_{p,d}(\P^n))$ of zero cycles  has been identified
to be the group of integers  for all $d > 0$ and $0\leq p\leq n$ (see  \cite{Hu-2021}).
Furthermore, it has been wildly conjectured that
\begin{conjecture}[{\cite[Conj.4.9]{Hu-2021}}]\label{conj1}
For all $d\geq 0$ and $0\leq p\leq n$,
$$
\Ch_q(C_{p,d}(\P^n))\cong H_{2q}(C_{p,d}(\P^n),\Z),\ \forall q\geq 0,
$$
where $H_{*}(-,\Z)$ denotes the singular homology group in integer coefficients.
\end{conjecture}


Note that $C_{0,d}(\P^n)$ is the $d$-th symmetric product $\sp^d(\P^n)$ of $\P^n$.
For $p=0$,
whether the special case in Conjecture \ref{conj1} holds or not is of fundamental importance
to understand the structure of the symmetric product of projective spaces.
Hence it has independent interest and we formulate it as a separate conjecture.
\begin{conjecture}\label{conj2}
For all $d\geq 0$, $q\geq 0$ and $0\leq p\leq n$, the cycle class map
\begin{equation}\label{eqn1.1}
cl:\Ch_q(\sp^d(\P^n))\stackrel{\cong}{\longrightarrow} H_{2q}(\sp^d(\P^n),\Z).
\end{equation}
is an isomorphism.
\end{conjecture}

It might be too optimistic to hope Conjecture \ref{conj1} holds in integer coefficients, even for the Chow variety parameterizing zero cycles.
But there is no evidence that the cycle class map $cl$ is not injective or surjective.
However, it is more probable that Conjecture \ref{conj2}
holds for $0\leq q \leq d$.
Furthermore, it is not hard to show that Equation \eqref{eqn1.1}
holds after tensoring rational coefficients (see Lemma \ref{lemma3.6}).
Naturally, we propose a weaker version of Conjecture \ref{conj1} as follows.
\begin{conjecture}\label{conj3}
For all $d\geq 0$ and $0\leq p\leq n$,
$$
\Ch_q(C_{p,d}(\P^n))_{\Q}\cong H_{2q}(C_{p,d}(\P^n),\Q),\ \forall q\geq 0.
$$
Here $H_{*}(-,\Q)$ denotes the singular homology group in rational coefficients.
\end{conjecture}
This conjecture implies that all topological cycles of even dimensional (in rational coefficients) on Chow varieties are  algebraic. We also conjecture that  all odd Betti numbers of
$C_{p,d}(\P^n)$ vanish.

Recall that for a complex projective variety or scheme  $X$,
the Chow group of $q$-cycles $\Ch_q(X)$ on $X$ is defined to be
$$
\Ch_q(X)=\cZ_q(X)/{\rm \{rational ~equivalence\}},
$$
where $\cZ_q(X)$  is the group of algebraic $q$-cycles on $X$.
Set $\Ch_q(X)_{\Q}:=\Ch_q(X)\otimes \Q$. There are natural cycle class maps
$cl:\Ch_q(X)\to H_{2q}(X,\Z)$  and $cl_{\Q}:\Ch_q(X)_{\Q}\to H_{2q}(X,\Q)$.
For more details on the theory of Chow groups, the reader is referred to Fulton (\cite{Fulton}).

The Chow groups carry rich information of a variety or scheme,
and they are in general  extremely hard to compute.
For most varieties, the structure of  their Chow groups
is far from being understood.
The most deepest conjectures,
such as  the Hodge conjecture and the Bloch-Beilinson conjecture in algebraic geometry,
are attempts to understand Chow groups.

In this paper, we identify Chow groups  $\Ch_q(C_{p,d}(\P^n))_{\Q}$ with their
corresponding singular homology groups in rational coefficients for $0\leq q\leq d$,
and this verified the wild conjecture \ref{conj3} in a certain range.
In particular,
we compute $\Ch_q(C_{p,d}(\P^n))_{\Q}$ for $0\leq q\leq d$
since the singular homology groups in this range have been obtained earlier (see \cite{Hu-2015}).
Moreover, we also prove the stability of Chow groups of Chow varieties
under natural embeddings or algebraic suspension map  in a certain range.
The precise statements are given in the next section.

\thanks{\emph{Acknowledgements}: }
This work is partially supported by the National Nature Science Foundation of China
(No. 12126309, 12126354, 12171351).
\section{Main results}







The notations in this paper are extensively borrowed from Lawson's paper (see \cite{Lawson1}).
Let $l_0\subset \P^n$ be a fixed $p$-dimensional linear subspace.
For each $d\geq 1$,
we consider the analytic embedding
\begin{equation}\label{eqn2.1}
i:C_{p,d}(\P^n)\hookrightarrow C_{p,d+1}(\P^n)
\end{equation}
defined by $c\mapsto c+l_0$.
From this sequence of embeddings, we can form the union
$$
\mC_{p}(\P^n)=\lim_{d\to\infty} C_{p,d}(\P^n).
$$
which is called the Friedlander completion of  $C_{p,d}(\P^n)$.
The topology on $\mC_{p}(\P^n)$ is the weak topology for $\{ C_{p,d}(\P^n)\}_{d=1}^{\infty}$,
that is,
a set $C\subset \mC_{p}(\P^n)$ is closed
if and only if $C\cap  C_{p,d}(\P^n)$ is closed in $C_{p,d}(\P^n)$ for all $d\geq 1$.
For more details, the reader is referred to the paper \cite{Lawson1}.

We define the Chow group $\Ch_q(\mC_p(\P^n))$ of algebraic $q$-cycles on $\mC_p(\P^n)$
to be the direct limit $\lim\limits_{d\to \infty}\Ch_q(C_{p,d}(\P^n))$.
Meanwhile, we have
$$
\lim_{d\to \infty}H_{2q}(C_{p,d}(\P^n),\Q)=H_{2q}(\lim_{d\to \infty}C_{p,d}(\P^n),\Q)
=H_{2q}(\mC_{p}(\P^n),\Q)
$$
for all $0\leq p\leq n$ and $q\geq 0$.

Our first main result states as follows.

\begin{theorem} \label{Thm2.1}
For $0\leq p\leq n$ and $q\geq 0$, we have isomorphisms
$$
\Ch_q(\mC_p(\P^n))_{\Q}\cong H_{2q}(\mC_p(\P^n),\Q).
$$
\end{theorem}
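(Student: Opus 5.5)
The plan is to deduce the statement from a finite-level result in the stable range and then pass to the direct limit. For fixed $q$, the embeddings $i:C_{p,d}(\P^n)\hookrightarrow C_{p,d+1}(\P^n)$ of \eqref{eqn2.1} are proper morphisms, so they induce pushforwards on Chow groups compatible with the cycle class maps. This gives a direct system of commutative squares
$$
\xymatrix{\Ch_q(C_{p,d}(\P^n))_\Q \ar[r]^{i_*}\ar[d]_{cl_\Q} & \Ch_q(C_{p,d+1}(\P^n))_\Q\ar[d]^{cl_\Q}\\ H_{2q}(C_{p,d}(\P^n),\Q)\ar[r]^{i_*} & H_{2q}(C_{p,d+1}(\P^n),\Q).}
$$
Tensoring with $\Q$ commutes with direct limits, and homology commutes with the weak-topology union, as noted before the theorem. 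It therefore suffices to prove that $cl_\Q:\Ch_q(C_{p,d}(\P^n))_\Q\to H_{2q}(C_{p,d}(\P^n),\Q)$ is an isomorphism for all $d\ge q$, because the limit can be computed over the cofinal range $d\ge q$.

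For the finite statement I would reduce to $p=0$ by Lawson's algebraic suspension. Let $\Sigma: C_{p-1,d}(\P^{n-1})\to C_{p,d}(\P^n)$ be the cone on a point. It is a closed embedding, and I write $U$ for its complement. The localization sequence
$$\Ch_q(C_{p-1,d}(\P^{n-1}))\xrightarrow{\Sigma_*}\Ch_q(C_{p,d}(\P^n))\to \Ch_q(U)\to 0$$
shows that $\Sigma_*$ is surjective rationally as soon as $\Ch_q(U)_\Q=0$ for $q\le d$.

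To get this vanishing I would use the structure of Lawson's ``holomorphic taffy'' deformation. Cycles not containing the vertex are pushed, via a $\C^*$-action, toward cycles meeting a fixed hyperplane with controlled multiplicity. The idea is to exhibit $U$ as stratified by pieces that are fibered with affine-space fibres over loci of dimension smaller than $q$, or to show that every irreducible $q$-dimensional subvariety of $U$ degenerates under the $\C^*$-flow into the boundary. Either route kills $\Ch_q(U)_\Q$. Iterating the suspension $p$ times yields a rational surjection $\Ch_q(\sp^d(\P^{n-p}))_\Q\twoheadrightarrow \Ch_q(C_{p,d}(\P^n))_\Q$.

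Next I would compare with homology. By Lawson's complex suspension theorem and \cite{Hu-2015}, the topological suspension induces an isomorphism $H_{2q}(\sp^d(\P^{n-p}),\Q)\cong H_{2q}(C_{p,d}(\P^n),\Q)$ for $q\le d$. By Lemma \ref{lemma3.6}, the cycle class map $cl_\Q$ is an isomorphism for the symmetric product. The composite
$$\Ch_q(\sp^d(\P^{n-p}))_\Q\twoheadrightarrow \Ch_q(C_{p,d}(\P^n))_\Q\xrightarrow{cl_\Q} H_{2q}(C_{p,d}(\P^n),\Q)$$
is therefore an isomorphism. Since its first arrow is surjective, the second arrow $cl_\Q$ is an isomorphism, which is what is needed at each finite level.

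The main obstacle is the vanishing $\Ch_q(U)_\Q=0$ for $q\le d$, that is, the algebraic analogue of the fact that the complement of the suspension image is topologically trivial in the stable range. Lawson's deformation is only continuous in general, so it must be replaced by a genuinely algebraic statement, such as an explicit affine stratification or a rational-equivalence degeneration. I would first try to use the $\C^*$-action that contracts toward the vertex, and argue that any $q$-dimensional subvariety of $U$ with $q\le d$ is rationally equivalent to one supported in the boundary.
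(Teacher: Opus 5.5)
\textbf{There is a genuine gap: the vanishing $\Ch_q(U)_\Q=0$ for $q\le d$, on which everything rests, is not proved, and the mechanism you suggest cannot prove it.} Your reduction to finite levels $d\ge q$ and your closing diagram chase are logically sound. The chase uses rational surjectivity of the Chow-level suspension, Lemma \ref{lemma3.6}, and the homological suspension isomorphism.

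By localization, the vanishing you need is exactly the surjective half of Theorem \ref{Thm2.3} at a fixed degree. That is stronger than the statement you are asked to prove, and the paper only reaches it in Section 4, after Theorem \ref{Thm2.1}.

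Here is why the $\C^*$-flow is not enough. The flow $\phi_t$ degenerates a family into $\Sigma C_{p-1,d}(\P^{n-1})$ only if the family lies in the open set $T_{p,d}(\P^n)$ of cycles meeting the hyperplane $\P^{n-1}$ properly. That is the only place where the limit $\phi_\infty$ is single-valued (Proposition \ref{Prop3.2}). Cycles with a component inside $\P^{n-1}$ are not pushed toward cones at all. For example, with $p=1$ and $n=3$, any linear $\P^q$ of plane curves in $C_{1,d}(\P^2)\subset U$ is pointwise fixed by $\phi_t$.

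The other options you mention do not fill the gap either. Moving a $q$-dimensional family into $T_{p,d}(\P^n)$ by a projective transformation is only guaranteed, by a dimension count, for $q\le p$. This is because the hyperplanes containing a component of a given $p$-cycle form a set of codimension $\ge p+1$. An affine paving of the singular, reducible Chow variety is not available.

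The missing ingredient is a moving lemma into $T$ that works for all $q$. The paper uses Lawson's pulling construction $F_{tD}$, with a generic divisor $D\subset\P^{n+2}$ of degree $e$. The codimension bound of Proposition \ref{Prop3.4} grows with $e$, so for $e$ large one can avoid the bad divisors of a $q$-dimensional family. Then $e\cdot V$ is rationally equivalent to $F_D(V)\subset T$, and Proposition \ref{Prop3.2} pushes this into the suspension image (Proposition \ref{Prop3.5}, Lemma \ref{lemma3.9}).

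This construction replaces $V$ by $e\cdot V$ and the degree $d$ by $de$. So it says nothing at a fixed degree; it only yields surjectivity of $\Sigma_*$ on the completions $\Ch_q(\mC_p(\P^n))$. That is why the paper proves Theorem \ref{Thm2.1} directly at the limit level, by induction on $p$ starting from Lemma \ref{lemma3.6}. Lawson's isomorphism $H_{2q}(\mC_{p'}(\P^{n'}))\cong H_{2q}(\mC_{p'+1}(\P^{n'+1}))$ then upgrades the surjectivity of $\Sigma_*^c$ to an isomorphism. The finite-level statement, Corollary \ref{cor2.4}, is deduced only afterwards, using the additional stability result Proposition \ref{Prop4.3}.

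To repair your proposal, drop the reduction to a fixed $d$. Run your suspension argument on $\mC_p(\P^n)$, using the pulling construction rather than the $\C^*$-flow alone.
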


This verifies Conjecture \ref{conj3} for the Friedlander completion of Chow varieties.
 Theorem \ref{Thm2.1} indicates that every  topological cycle in rational coefficients on
Chow varieties $C_{p,d}(\P^n)$ are homologous to an algebraic cycle under the direct limit
of embeddings \ref{eqn2.1}.

The following result is a Chow group version of a conjecture by Lawson,
i.e., $i:C_{p,d}(\P^n)\to C_{p,d+1}(\P^n)$ is $2d$-connected,
which follows from the main result in \cite{Hu-2015}.
\begin{theorem}\label{Thm2.2}
For all $0\leq p\leq n$,
the embedding  $i:C_{p,d}(\P^n)\to C_{p,d+1}(\P^n)$ induces
isomorphism $i_*:\Ch_q(C_{p,d}(\P^n))_{\Q}\cong \Ch_q(C_{p,d+1}(\P^n))_{\Q}$ for $0\leq q\leq d$.
\end{theorem}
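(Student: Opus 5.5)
We would prove the statement by localization. Put $U_{d+1}:=C_{p,d+1}(\P^n)\setminus i(C_{p,d}(\P^n))$, the open set of effective cycles of degree $d+1$ that do not contain $l_0$ as a component. The exact sequence of Chow groups for the closed embedding $i$ reads
$$
\Ch_q(C_{p,d}(\P^n))\stackrel{i_*}{\longrightarrow}\Ch_q(C_{p,d+1}(\P^n))\longrightarrow \Ch_q(U_{d+1})\longrightarrow 0 .
$$
So surjectivity of $i_*$ (rationally) for $q\le d$ reduces to showing $\Ch_q(U_{d+1})_\Q=0$ for $q\le d$. For injectivity we use a left inverse or a comparison with homology.

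\textbf{Vanishing on the complement.} We follow Lawson's approach from \cite{Lawson1}. Choose a linear $(n-p-1)$-plane $L$ disjoint from $l_0$, so that linear projection from $L$ maps $\P^n\setminus L$ onto $l_0\cong\P^p$. The algebraic $\C^*$-action that scales the fibres of this projection (fixing $l_0$ pointwise) acts on the Chow variety. For a cycle $c$ not meeting $L$, the limit $t\to 0$ is $\deg(c)\cdot l_0$, which lies in the image of $i$ when $\deg c=d+1$. The difficult part is cycles meeting $L$. Lawson handles these by a stratification according to the intersection with $L$, and the fibres of the resulting "retraction" are built from affine spaces and Chow varieties of lower-dimensional data. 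We would stratify $U_{d+1}$ into locally closed pieces $S_\alpha$. Each piece is either (a) $\C^*$-equivariantly an affine-space bundle over something that is not closed, or (b) has dimension $>d$ in the sense that each $S_\alpha$ is an $\A^{m_\alpha}$-fibration with $m_\alpha\ge d+1$. For such fibrations the homotopy invariance of Chow groups (flat pullback along affine bundles, rationally also along fibrations with affine-space fibres, via Fulton's Ch.~1) gives $\Ch_q(S_\alpha)_\Q=0$ whenever $q<m_\alpha$. Combining the localization sequences over the stratification then gives $\Ch_q(U_{d+1})_\Q=0$ for $q\le d$. The key numerical input is that any cycle of degree $d+1$ not containing $l_0$ moves in an affine family of dimension at least $d+1$. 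Concretely, we deform the components "toward" $l_0$: this is Lawson's count, reflecting that his homotopy equivalence $C_{p,d+1}\setminus C_{p,d}$ has connectivity $2d+1$ (cf.\ \cite{Hu-2015}). We expect this step to be the main obstacle, namely making the fibration structure algebraic and Zariski-locally trivial enough for Chow groups. We would try first to use the Białynicki-Birula decomposition of the $\C^*$-action on the normalization (or on a $\C^*$-equivariant resolution), and compare via proper surjective pushforward, which is surjective on rational Chow groups.

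\textbf{Injectivity.} Here we use the cycle class map. By Lemma~\ref{lemma3.6} and the analogous argument for Chow varieties, we want $cl_\Q:\Ch_q(C_{p,d}(\P^n))_\Q\to H_{2q}(C_{p,d}(\P^n),\Q)$ to be injective for $q\le d$. Injectivity of $i_*$ would then follow from the homological result of \cite{Hu-2015}, which says $i_*$ is an isomorphism on $H_{2q}$ for $2q\le 2d$. To get injectivity of $cl_\Q$ we argue by induction on $d$. The base case $C_{p,1}$ is a Grassmannian, which has a cellular decomposition, so $cl$ is an isomorphism. For the inductive step, the same stratification shows $C_{p,d+1}$ is built from $i(C_{p,d})$ by adding affine-fibred strata of dimension $>d$. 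Then by Totaro/Fulton–MacPherson-type arguments for linear (cellular) varieties, $cl_\Q$ is an isomorphism in degrees $q\le d$ once it is on $C_{p,d}$. This simultaneously yields the isomorphism $\Ch_q(C_{p,d})_\Q\cong\Ch_q(C_{p,d+1})_\Q$ for $0\le q\le d$.
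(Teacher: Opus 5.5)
\paragraph{Surjectivity.} Reducing surjectivity to $\Ch_q(U_{d+1})_\Q=0$ for $q\le d$ is correct. But that vanishing is the entire content of the surjectivity half, and for $p\ge 1$ you only assert it. The stratification of $U_{d+1}$ into $\A^{m_\alpha}$-fibrations with $m_\alpha\ge d+1$ is never constructed, and none of the tools you name produces it.

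Your count is right in two cases:
\begin{itemize}
\item For $p=0$, $U_{d+1}=\sp^{d+1}(\P^n\setminus x_0)$ is a finite quotient of a rank-$(d+1)$ vector bundle over $(\P^{n-1})^{d+1}$.
\item For divisors everything is a projective space.
\end{itemize}
For, say, curves in $\P^3$, no such structure is known.

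The $\C^*$-flow toward $l_0$ does not supply it. It contracts the open set of cycles missing $L$ to the single point $(d+1)l_0$, which is not even in $U_{d+1}$. Being contracted to a point by a $\C^*$-action gives no control of Chow groups in positive dimension: the affine cone over a plane cubic has $\Ch_1\otimes\Q\neq 0$.

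Bia\l ynicki-Birula on an equivariant resolution only trades the problem for Chow groups of fixed components of the resolution. Nothing gives the bound $\ge d+1$ on cell dimensions over $U_{d+1}$.

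Nor can you import the topological $2d$-connectivity behind \cite{Hu-2015}, since Chow groups are not homotopy invariant. This is why the paper works only with explicit one-parameter families that produce rational equivalences: $t\mapsto F_{tD}$ in Proposition~\ref{Prop3.5} and $t\mapsto\phi_t$ in Proposition~\ref{Prop3.2}.

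\paragraph{Injectivity.} Here there is an independent gap: your induction on $d$ does not close.

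At the next stage, injectivity of $i_*$ in the top degree $q=d+1$ requires $cl_\Q$ to be injective on $\Ch_{d+1}(C_{p,d+1}(\P^n))_\Q$. Your inductive hypothesis controls $C_{p,d}(\P^n)$ only in degrees $\le d$. In degree $d+1$ the localization sequence involves two groups you do not control:
\begin{itemize}
\item $\Ch_{d+1}(C_{p,d}(\P^n))_\Q$, which lies outside the inductive range;
\item $\Ch_{d+1}(U_{d+1})_\Q$, which is nonzero in general (it is already $\Q$ for $p=0$).
\end{itemize}
So you would need the diagonal case, $cl_\Q$ injective on $\Ch_q(C_{p,q}(\P^n))_\Q$ for every $q$, and you never address it.

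The appeal to Totaro-type results presupposes that Chow varieties are linear varieties, which is not known. Lemma~\ref{lemma3.6} has no analogue for $p\ge 1$, because there is no finite-quotient presentation of $C_{p,d}(\P^n)$ by a cellular variety.

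\paragraph{The paper's route.} The paper never studies $C_{p,d+1}(\P^n)\setminus i(C_{p,d}(\P^n))$.
\begin{itemize}
\item It first proves the stable statement, Theorem~\ref{Thm2.1}, via the Chow-group suspension Theorem~\ref{Thm3.1}.
\item In Proposition~\ref{Prop4.3} it argues that the maps $c\mapsto e\cdot c$ induce isomorphisms in degrees $q\le d$. This goes by induction on $p$, starting from the symmetric-product case (Lemma~\ref{lemma4.1}) and using diagram \eqref{eqn4.4}.
\item It obtains Theorem~\ref{Thm2.2} by comparing both $C_{p,d}(\P^n)$ and $C_{p,d+1}(\P^n)$ with $\mC_p(\P^n)$.
\end{itemize}
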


Combining results in the above two theorems and the homotopy type of $\mC_p(\P^n)$,
we identify the structure of the $q$-th Chow group  of $C_{p,d}(\P^n)$ as
$$
\Ch_q(C_{p,d}(\P^n))_{\Q}\cong H_{2q}(\mC_p(\P^n),\Q)
$$
for all  $0\leq q\leq d$, where
$$
H_{2q}(\mC_p(\P^n),\Q)\cong H_{2q}\bigg(\prod_{i=1}^{n-p} K(\Z,2i),\Q\bigg),
$$
and $K(\Z,m)$ is the Eilenberg-MacLane space
whose $m$-th homotopy group is isomorphic to $\Z$,
and all other homotopy groups are trivial.
Note that the $\Q$-dimension of
$H_{2q}\big(\prod_{i=1}^{n-p} K(\Z,2i),\Q\big)$ can be easily computed by the K\"{u}nneth formula.

The following result is the Chow group version of the Lawson suspension theorem for homotopy groups.
\begin{theorem}\label{Thm2.3}
For all $0\leq p\leq n$,
the suspension map $\Sigma:C_{p,d}(\P^n)\to C_{p+1,d}(\P^{n+1})$ induces
isomorphism
$$\Sigma_*:\Ch_q(C_{p,d}(\P^n))_{\Q}\cong \Ch_q(C_{p+1,d}(\P^{n+1}))_{\Q}$$ for $0\leq q\leq d$.
\end{theorem}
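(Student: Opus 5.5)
Our plan is to reduce Theorem \ref{Thm2.3} to Theorem \ref{Thm2.2} together with the known topological suspension theorem, with the cycle class maps serving as the bridge. The suspension $\Sigma:C_{p,d}(\P^n)\to C_{p+1,d}(\P^{n+1})$ sends a cycle to its join with a fixed point $x_\infty\in\P^{n+1}\setminus\P^n$. It is an algebraic (in fact closed) embedding, so it induces a proper pushforward $\Sigma_*$ on Chow groups. This pushforward is compatible with the cycle class maps $cl_\Q$ and with the stabilization embeddings $i$. Compatibility with $i$ holds provided we use $\Sigma(l_0)$ as the base $(p+1)$-plane in $\P^{n+1}$, since $\Sigma(c+l_0)=\Sigma(c)+\Sigma(l_0)$.

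\textbf{Step 1 (comparison at $q\le d$).} We first establish, for $0\le q\le d$, that
$$cl_\Q:\Ch_q(C_{p,d}(\P^n))_\Q\to H_{2q}(C_{p,d}(\P^n),\Q)$$
is an isomorphism. By Theorem \ref{Thm2.2}, the maps $\Ch_q(C_{p,d})_\Q\to \Ch_q(C_{p,d+k})_\Q$ are isomorphisms for all $k\ge0$, because $q\le d\le d+k$. Hence $\Ch_q(C_{p,d}(\P^n))_\Q\cong\Ch_q(\mC_p(\P^n))_\Q$. On the topological side, the result of \cite{Hu-2015} says that $i$ is $2d$-connected, which gives $H_{2q}(C_{p,d},\Q)\cong H_{2q}(\mC_p(\P^n),\Q)$ for $2q\le 2d$. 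Here we need the isomorphism in degree $2q$ itself, which requires $(2d+1)$-connectivity or an extra argument at the top degree. If only surjectivity is available at $q=d$, we instead use the injectivity of $cl_\Q$ in the limit. Theorem \ref{Thm2.1} identifies the limits compatibly with $cl_\Q$. Therefore the square formed by $cl_\Q$ and the stabilization maps shows that $cl_\Q$ is an isomorphism at level $d$ for $q\le d$.

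\textbf{Step 2 (topological suspension).} Lawson's complex suspension theorem \cite{Lawson1} says that $\Sigma:\mC_p(\P^n)\to\mC_{p+1}(\P^{n+1})$ is a homotopy equivalence. Both completions are in fact modelled on $\prod_{i=1}^{n-p}K(\Z,2i)$, and $n-p=(n+1)-(p+1)$. Hence $\Sigma_*$ is an isomorphism on $H_{2q}(-,\Q)$ of the completions.

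\textbf{Step 3 (conclusion).} We then chase the commutative diagram, writing $C=C_{p,d}(\P^n)$, $C'=C_{p+1,d}(\P^{n+1})$, $\mC=\mC_p(\P^n)$ and $\mC'=\mC_{p+1}(\P^{n+1})$:
$$\begin{array}{ccccc}
\Ch_q(C)_\Q&\to&H_{2q}(C,\Q)&\to&H_{2q}(\mC,\Q)\\
\downarrow&&&&\downarrow\\
\Ch_q(C')_\Q&\to&H_{2q}(C',\Q)&\to&H_{2q}(\mC',\Q)
\end{array}$$
In each row the composite is an isomorphism for $q\le d$, by Step 1 applied to $(p,n)$ and to $(p+1,n+1)$. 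The right vertical arrow is an isomorphism by Step 2. It follows that the left vertical arrow $\Sigma_*$ is an isomorphism.

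\textbf{Main obstacle.} The delicate point is Step 1, specifically the naturality of all identifications. We must verify that $\Sigma$ commutes with the stabilization embeddings at the level of Chow groups, and that the identification $\Ch_q(C_{p,d})_\Q\cong H_{2q}(\mC_p,\Q)$ goes through $cl_\Q$ rather than through an abstract isomorphism. If the earlier results only provide abstract isomorphisms, the fallback is a dimension count. Since $\Sigma_*$ is compatible with $cl_\Q$, we would show it is surjective using explicit generators, for instance Chow-type cells built from sums of suspended linear cycles, which map onto the corresponding cells. Injectivity would then follow from equality of $\Q$-dimensions, both sides being $\dim H_{2q}(\prod_{i=1}^{n-p}K(\Z,2i),\Q)$.
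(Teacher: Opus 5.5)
Your argument is correct, but it takes a different route from the paper's.

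\textbf{The paper's route.} The paper deduces Theorem~\ref{Thm2.3} from the square \eqref{eqn4.4}. Its horizontal arrows are the multiplication-by-$e$ maps $(\Phi_{p,d,n,e})_*$ and $(F_{0D})_*=(\Phi_{p+1,d,n+1,e})_*$, and its right column is $\Sigma_*$ at the finite level $de$. The paper imports from the inductive proof of Proposition~\ref{Prop4.3} that these three arrows are isomorphisms for $e$ large. The right column is extracted there from Theorem~\ref{Thm3.1} by passing from the completion back to a finite level.

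\textbf{Your route.} You compare both finite levels directly with the Friedlander completions. Because $\Sigma\circ i=i\circ\Sigma$ (with base plane $\Sigma l_0$), Theorem~\ref{Thm2.2} applied to $(p,n)$ and to $(p+1,n+1)$ makes both maps $\Ch_q(C_{p,d}(\P^n))_\Q\to\Ch_q(\mC_p(\P^n))_\Q$ and $\Ch_q(C_{p+1,d}(\P^{n+1}))_\Q\to\Ch_q(\mC_{p+1}(\P^{n+1}))_\Q$ isomorphisms for $q\le d$. Theorem~\ref{Thm2.1} together with Lawson's suspension theorem then handles the completions.

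\textbf{Comparison.} The paper proves Theorem~\ref{Thm2.2} without using Theorem~\ref{Thm2.3}, so there is no circularity. What your version buys is simplicity: it is a two-out-of-three argument that avoids the $F_{tD}$ construction and any finite-level suspension statement at level $de$. It shows that Theorem~\ref{Thm2.3} is a formal consequence of Theorems~\ref{Thm2.2} and~\ref{Thm3.1}. The detour through homology is optional, since Theorem~\ref{Thm3.1} already supplies the right-hand column at the level of Chow groups.

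\textbf{Step 1 and its fallbacks.} The row composite in your Step~3 diagram equals the Chow stabilization followed by $cl_\Q$ on the completion. It is therefore an isomorphism by Theorems~\ref{Thm2.2} and~\ref{Thm2.1} alone. You never need the homological stabilization $s_h:H_{2d}(C_{p,d}(\P^n),\Q)\to H_{2d}(\mC_p(\P^n),\Q)$ to be injective.

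This matters because your proposed fallback for the top degree would not have worked. From $s_h\circ cl_\Q$ being an isomorphism you only get that $cl_\Q$ is injective at level $d$ and $s_h$ is surjective. Surjectivity of $cl_\Q$ at $q=d$ genuinely requires injectivity of $s_h$.

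The dimension-count fallback at the end is also unnecessary. The isomorphisms in Theorems~\ref{Thm2.1} and~\ref{Thm2.2} are the natural maps $cl_\Q$ and $i_*$, not abstract ones.
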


The suspension map $\Sigma$ is defined in the next section.

We will prove these theorems by developing Lawson's idea
in studying homotopy type of Chow monoids \cite{Lawson1}.
Compared to Lawson's method on homotopy groups,
there are significant differences and changes in the proof of our results.
One of the big differences is that Chow groups are not homotopy invariants.



As applications of Theorem \ref{Thm2.1} and \ref{Thm2.2},
the Lawson suspension theorem \cite[Th.3]{Lawson1} and \cite[Th.1]{Hu-2015},
we obtain Chow groups of $C_{p,d}(\P^n)$ up to degree $d$,
and this partly verify Conjecture \ref{conj3}.

 \begin{corollary}\label{cor2.4}
For all $0\leq p\leq n$, the cycle class map
$$
cl:\Ch_q(C_{p,d}(\P^n))_{\Q}\cong H_{2q}(C_{p,d}(\P^n),\Q)
$$
is an isomorphism for any $0\leq q\leq d$.
\end{corollary}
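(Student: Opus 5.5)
Corollary \ref{cor2.4} follows from Theorems \ref{Thm2.1} and \ref{Thm2.2} by a commutative-square argument comparing the Chow-theoretic and topological direct systems. The cycle class map is natural for proper pushforward, and the embedding $i:C_{p,d}(\P^n)\to C_{p,d+1}(\P^n)$ is a closed (hence proper) morphism of projective algebraic sets. So for each $d$ there is a commutative square
$$
\begin{array}{ccc}
\Ch_q(C_{p,d}(\P^n))_{\Q} & \stackrel{i_*}{\longrightarrow} & \Ch_q(C_{p,d+1}(\P^n))_{\Q}\\
\downarrow cl_{\Q} & & \downarrow cl_{\Q}\\
H_{2q}(C_{p,d}(\P^n),\Q) & \stackrel{i_*}{\longrightarrow} & H_{2q}(C_{p,d+1}(\P^n),\Q).
\end{array}
$$
Passing to the direct limit gives a map $\varinjlim cl_{\Q}:\Ch_q(\mC_p(\P^n))_{\Q}\to H_{2q}(\mC_p(\P^n),\Q)$. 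I will use that Theorem \ref{Thm2.1} is proved by showing that this limiting cycle class map is the isomorphism, rather than by producing an abstract isomorphism. If the proof of Theorem \ref{Thm2.1} only yields an abstract isomorphism, I will instead argue by dimension count combined with surjectivity, as explained below.

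Next, fix $q\le d$. By Theorem \ref{Thm2.2}, $i_*:\Ch_q(C_{p,e}(\P^n))_{\Q}\to \Ch_q(C_{p,e+1}(\P^n))_{\Q}$ is an isomorphism for every $e\ge d$, since then $q\le d\le e$. Hence the canonical map $\Ch_q(C_{p,d}(\P^n))_{\Q}\to \Ch_q(\mC_p(\P^n))_{\Q}$ is an isomorphism.

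On the topological side, the main result of \cite{Hu-2015} (the $2d$-connectivity of $i$, Lawson's conjecture) shows that $i_*:H_{2q}(C_{p,e}(\P^n),\Q)\to H_{2q}(C_{p,e+1}(\P^n),\Q)$ is an isomorphism for $2q\le 2e$, hence for all $e\ge d$. Therefore $H_{2q}(C_{p,d}(\P^n),\Q)\to H_{2q}(\mC_p(\P^n),\Q)$ is an isomorphism as well. The stated identification of homology with rational coefficients commutes with this direct limit, because homology commutes with colimits of closed embeddings of CW complexes.

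In the limit square, three sides are now isomorphisms: the two canonical maps to the limits and the limiting cycle class map from Theorem \ref{Thm2.1}. Hence $cl_{\Q}$ on $C_{p,d}(\P^n)$ is an isomorphism for $0\le q\le d$.

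The main obstacle is making sure the isomorphism in Theorem \ref{Thm2.1} is induced by $cl_{\Q}$. If it is only an abstract isomorphism, I will proceed as follows.
\begin{itemize}
\item Both sides of the square for $C_{p,d}(\P^n)$ are finite-dimensional $\Q$-vector spaces of the same dimension, namely that of $H_{2q}\big(\prod_{i=1}^{n-p}K(\Z,2i),\Q\big)$. The Chow side is finite-dimensional because it is isomorphic to $H_{2q}(\mC_p(\P^n),\Q)$.
\item It then suffices to prove that $cl_{\Q}$ is surjective. Lawson's argument exhibits generators of $H_{2q}(\mC_p(\P^n),\Q)$ as images of fundamental classes of algebraic subvarieties built from joins and sums of linear cycles, which I expect the proof of Theorem \ref{Thm2.1} to construct explicitly.
\end{itemize}

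A smaller point is checking that the degree range in \cite{Hu-2015} covers $2q=2d$ on the nose. This holds because $2d$-connectivity gives an isomorphism on $H_k$ for $k<2d$ but only a surjection on $H_{2d}$. If that fails at the top degree, I will handle $q=d$ with the surjectivity argument above, using injectivity on the Chow side from Theorem \ref{Thm2.2}.
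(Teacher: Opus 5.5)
Your argument is correct and is essentially the paper's own: Corollary \ref{cor2.4} is read off from the limit square \eqref{eqn4.5}. In that square the left vertical map is an isomorphism for $q\le d$ by Chow-group stabilization (the paper obtains it from the multiplication-by-$e$ maps of Proposition \ref{Prop4.3} and then deduces Theorem \ref{Thm2.2} from it), the right vertical map is an isomorphism by \cite{Hu-2015}, and the bottom map is $cl_{\Q}$ itself, which the inductive proof of Theorem \ref{Thm2.1} shows to be an isomorphism, so your fallback is unnecessary. One caution: the top degree $q=d$ genuinely needs injectivity of $H_{2d}(C_{p,d}(\P^n),\Q)\to H_{2d}(C_{p,d+1}(\P^n),\Q)$, which the paper takes from \cite{Hu-2015}, and your proposed fallback for $q=d$ could not replace it, because $cl_{\Q}$ followed by the map to the limit is an isomorphism, so any nonzero kernel of that homology map would be a complement to the image of $cl_{\Q}$.
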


The first $2d$ homology group of $C_{p,d}(\P^n)$
has been computed by the formula in \cite[Cor. 5]{Hu-2015},
hence by Corollary \ref{cor2.4} we obtain the rank of $\Ch_q(C_{p,d}(\P^n))$ for $0\leq q\leq d$ and all $0\leq p\leq n$.

\begin{remark}\label{rmk2.5}
If the Conjecture \ref{conj2} holds, or
its special case that
$$cl:\Ch_q(\sp^d(\P^n))\stackrel{\cong}{\longrightarrow} H_{2q}(\sp^d(\P^n),\Z),\  0\leq q\leq d$$
  holds,
then Corollary \ref{cor2.4} holds in the same range in integer coefficients.
Moreover,
if the cycle class map $cl$ on $\Ch_q(\sp^d(\P^n))$ is injective (resp. surjective),
then the cycle class map on $\Ch_q(C_{p,d}(\P^n))$ is also injective (resp. surjective).
These statements are shown in the proof of Theorem \ref{Thm2.1}.
\end{remark}

In particular, for $0$-cycles,
we obtain $\Ch_0(C_{p,d}(\P^n))\cong \Z$ for all $0\leq p\leq n$ and
$d>0$ from Corollary \ref{cor2.4} and Remark \ref{rmk2.5},
since by the rationality of $\sp^d(\P^n)$, $\Ch_0(\sp^d(\P^n))\cong \Z$.
This recovers a result in \cite[Prop. 4.7]{Hu-2021},
where a $\C^*$-action on the Chow variety $C_{p,d}(\P^n)$ is involved.

For $1$-cycles,
we have $\Ch_1(C_{p,d}(\P^n))\cong\Z\oplus Tor_1$
and $H_{2}(C_{p,d}(\P^n))\cong \Z$
for all $0\leq p< n$ and $d>0$,
where $Tor_1$ is the torsion group of $\Ch_1(C_{p,d}(\P^n))$.
It is probably that
$Tor_1$ is actually zero for all $p,d,n$.
This is the Chow group  analogue
of a result for the homotopy group of $C_{p,d}(\P^n)$ (see \cite[Cor. 3.7]{Hu-Deg2}).

\begin{remark}
The analogous result of  Theorem \ref{Thm2.1}, \ref{Thm2.2}
for Chow varieties over algebraically closed fields
can be obtained by applying Friedlander's generalization
of Complex Suspension method to algebraic cases,
where singular homology groups will be replaced by $\ell$-adic homology groups.
\end{remark}

\section{The technique of Suspension on Chow groups}

Now we briefly review Lawson's idea in the proof of the Complex Suspension Theorem.
More detailed explanation of the Suspension Theorem over the complex number field,
or arbitrary algebraically closed field,
and background materials in this section
can be found in \cite{Lawson1}, \cite{Friedlander1} and \cite{Lawson2}.
Inspired by Lawson's work on homotopy type of the space of algebraic cycles, we
compute Chow groups of Chow varieties and its Friedlander completion.

Fix a hyperplane $\P^n\subset\P^{n+1}$ and a point $\P^0\in \P^{n+1}-\P^{n}$.
Let $V\subset \P^n$ be any closed algebraic subset.
The \textbf{algebraic suspension of $V$} with vertex $\P^0$ (i.e., cone over $V$) is the set
$$
\Sigma V:=\cup\{ l~|~ l \hbox{ is a projective line through $\P^0$ and intersects $V$}\}.
$$
The suspension map $\Sigma$ induces a morphism $\Sigma: C_{p,d}(\P^{n}) \to C_{p+1,d}(\P^{n+1})$
and it commutes with the embedding $i$ in Equation \eqref{eqn2.1}.
Hence there is an induced map $\Sigma:\mC_{p}(\P^{n}) \to \mC_{p+1}(\P^{n+1})$.
Moreover, $\Sigma$ induces homomorphisms on Chow groups
$\Sigma_*: \Ch_q(C_{p,d}(\P^{n})) \to \Ch_q(C_{p+1,d}(\P^{n+1}))$ and
$\Sigma_*: \Ch_q(\mC_{p}(\P^{n})) \to \Ch_q(\mC_{p+1}(\P^{n+1}))$ for all $q\geq 0$.

\begin{theorem}\label{Thm3.1}
For all $q\geq 0$,
the induced homomorphisms
$$\Sigma_*:\Ch_q(\mC_{p}(\P^{n})) \to \Ch_q(\mC_{p+1}(\P^{n+1}))$$
are surjective, and they are isomorphic after tensoring with rational coefficients.
\end{theorem}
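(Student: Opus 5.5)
I will follow Lawson's proof of the Complex Suspension Theorem, replacing homotopy-theoretic arguments by the standard exact sequences for Chow groups (Fulton): the localization sequence $\Ch_q(Z)\to\Ch_q(X)\to\Ch_q(X-Z)\to 0$ for closed $Z\subset X$, and for the rational statement the fact that a flat vector-bundle-like or $\C^*$-fibration behaves well rationally. Fix the level $d$. Let $T_{p+1,d}(\P^{n+1})\subset C_{p+1,d}(\P^{n+1})$ be the closed subset of cycles $c=\sum n_iV_i$ such that some $V_i$ lies in the hyperplane $\P^n$ (equivalently, $\dim(c\cap\P^n)>p$), so its complement $U_d$ consists of cycles meeting $\P^n$ properly. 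Lawson's key facts are: (a) $\Sigma(C_{p,d}(\P^n))\subset U_d$ is a closed subset, and the holomorphic $\C^*$-family of projections $\phi_t$ from $\P^0$ (pushing cycles toward the cone over their intersection with $\P^n$) gives a flow $\Phi:U_d\times\C\to U_d$ with $\Phi_1=\mathrm{id}$ and $\Phi_0$ retracting $U_d$ onto $\Sigma C_{p,d}(\P^n)$, with $\Phi_0\circ\Sigma=\Sigma$ and $\Phi_0$ given by $c\mapsto \Sigma(c\cdot\P^n)$; (b) after passing to the limit via $i$, the subset $T$ becomes negligible, namely every cycle in $T_{p+1,d}$ is moved, by adding a suitable fixed multiple of a linear space and using Lawson's ``magic fan'' construction, into $U_{d'}$ for some $d'\gg d$ compatibly with rational equivalence.

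The key steps, in order. First, I show that $\Phi_0:U_d\to\Sigma C_{p,d}(\P^n)\cong C_{p,d}(\P^n)$ (the map $c\mapsto c\cdot\P^n$) is a morphism whose restriction to $\Sigma C_{p,d}$ is inverse to $\Sigma$, and that the family $\Phi_t$ is an algebraic family over $\A^1$; hence for any subvariety $W\subset U_d$ the cycles $\Phi_1(W)=W$ and $\Phi_0(W)$ are rationally equivalent in $\overline{U_d}$ (specialization of a family over $\A^1$ gives a rational equivalence on the total space pushed forward). This yields $\Ch_q(U_d)$ generated by $\Sigma_*\Ch_q(C_{p,d}(\P^n))$ modulo classes supported on $T$. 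Second, by localization, $\Ch_q(C_{p+1,d}(\P^{n+1}))$ is generated by the image of $\Sigma_*$ and of $\Ch_q(T_{p+1,d})$. Third, I show that the image of $\Ch_q(T_{p+1,d})$ in $\Ch_q(C_{p+1,d'}(\P^{n+1}))$ for $d'$ large lies in the image of $\Sigma_*$: using Lawson's construction, for a subvariety $W\subset T_{p+1,d}$, the translate $W+e\,l_0$ deforms through an algebraic family parametrized by a line (via a divisor of degree $e$ and the associated pencil $c\mapsto \Psi_t(c)$) into a subvariety of $U_{d+e}$; the deformation is algebraic, so again gives a rational equivalence. Combining, every class in the limit group is in the image of $\Sigma_*$, giving surjectivity integrally.

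For injectivity rationally, I construct a left inverse: on the limit, the map $c\mapsto c\cdot\P^n$ is defined on $U$, and I use the above moving to define a homomorphism $\Ch_q(\mC_{p+1}(\P^{n+1}))\to \Ch_q(\mC_p(\P^n))$, checking it is well defined on rational equivalences by moving entire rational-equivalence families (subvarieties of dimension $q+1$) off $T$, which again the Lawson argument allows after enlarging $d$. The composite with $\Sigma_*$ is the identity since $\Sigma(c)\cdot\P^n=c$. The rational coefficients enter because the moving construction produces $e\cdot c$ or multiplicities from degree-$e$ divisors, so the inverse is only defined up to multiplication by an integer, which becomes invertible after $\otimes\Q$. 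I expect the main obstacle to be this third step and its use in injectivity: Lawson's moving lemma is a homotopy statement, and one must check that the deformations are genuinely algebraic families of subvarieties of the Chow variety (so they induce rational equivalences rather than mere homotopies) and that they can be done uniformly in families of dimension $q+1$; I would first try to realize the magic fan deformation as a morphism $\A^1\times W\to C_{p+1,d'}$ and take its graph closure.
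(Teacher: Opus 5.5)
Your injectivity argument has a genuine gap. Every move you have into the open set (your $U_d$, the paper's $T_{p+1,d}$) goes through the magic fan. But $F_{tD}$ specializes at $t=0$ to the morphism $m_e\colon c\mapsto e\cdot c$, $C_{p+1,d}(\P^{n+1})\to C_{p+1,de}(\P^{n+1})$.

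\begin{itemize}
\item Consequently your left inverse $\rho$ (move into $U$, then apply $c\mapsto c\cdot\P^n$) satisfies $\rho\circ\Sigma_*=m_{e*}$, not the identity. You can see this by running the family $F_{tD}$ on a complete $Z\subset U$.
\item $m_{e*}$ is a pushforward along a morphism, not multiplication by the integer $e$. For $C_{0,d}(\P^1)=\P^d$, where $m_e$ is $f\mapsto f^e$ on binary forms, $m_{e*}$ is multiplication by $e^q$ on $\Ch_q$. In general it is not even a scalar on $H_{2q}(\mC_p(\P^n),\Q)$.
\item Nothing formal makes $m_{e*}$ injective on $\Ch_q(\mC_p(\P^n))_\Q$, so your claim that the inverse becomes defined ``after $\otimes\Q$'' is unjustified.
\end{itemize}

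The paper constructs no inverse. It uses the square $cl_\Q\circ\Sigma^c_*=\Sigma^h_*\circ cl_\Q$, where $\Sigma^h_*$ is an isomorphism by Lawson's topological suspension theorem. The base case $p=0$ is that $cl_\Q$ is an isomorphism for $\sp^d(\P^n)$ (Lemma~\ref{lemma3.6}, via $S_d$-invariants of $\Ch_*((\P^n)^d)_\Q$). Injectivity of $cl_\Q$ at level $p$ forces injectivity of $\Sigma^c_*$. Surjectivity of $\Sigma^c_*$ then carries the isomorphism $cl_\Q$ up to level $p+1$, and induction on $p$ finishes. This comparison with singular homology is the idea your plan is missing. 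Even the rational invertibility of $m_{e*}$ in Lemma~\ref{lemma4.1} and Proposition~\ref{Prop4.3} is obtained through $cl_\Q$.

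The surjectivity half also needs repair.

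\begin{itemize}
\item Steps 1--2 fail at fixed $d$. The specialization $W\rightsquigarrow\Phi_0(W)$ is a rational equivalence only when $W$ is complete, i.e.\ closed in $C_{p+1,d}(\P^{n+1})$ and contained in $U_d$; that is the form in which the paper uses Proposition~\ref{Prop3.2}. Also, $\Ch_q(U_d)$ is not generated by $\Sigma_*$.
\item Counterexample: for $C_{1,d}(\P^2)\cong\P^N$ with $N=d(d+3)/2$, your closed set $L+C_{1,d-1}(\P^2)\cong\P^{N-d-1}$ and $\Sigma C_{0,d}(\P^1)\cong\P^d$ are linear of dimension $<N$. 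So $[\P^N]\in\Ch_N$ is not in the span of their images.
\item The paper does not localize. Proposition~\ref{Prop3.5} moves an entire complete $V$ into the open set at once, choosing $D\notin B(V)$, which is possible once $\binom{p+e+1}{e}>q+1$. Proposition~\ref{Prop3.2} then flows it into $\Sigma C_{p,de}(\P^n)$.
\item In Step 3, $W+e\,l_0$ still lies in your $T$. The magic fan starts at $m_e$, not at $c\mapsto c+e\,l_0$. What you actually obtain is $m_{e*}[W]\in\mathrm{Im}\,\Sigma_*$, not $[W]\in\mathrm{Im}\,\Sigma_*$ in the limit along $i$.
\end{itemize}

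This is also all that the paper's argument for Lemma~\ref{lemma3.9} literally produces: its ``$e\cdot Z$'' is $m_{e*}Z$. So the passage from $m_{e*}\alpha\in\mathrm{Im}\,\Sigma_*$ to $\alpha\in\mathrm{Im}\,\Sigma_*$ needs a separate argument in either version.
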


This is the Chow group version of the Lawson Suspension Theorem on homotopy and homology groups.
We will show this theorem at the end of this section
since we need some preparation.

Consider a holomorphic vector field which is zero on $\P^n$ and on its polar point $\P^0$
as follows:
Choose homogenous coordinates $[z_0:z_1:...:z_{n+1}]$ for $\P^{n+1}$ such that
$\P^n=(z_0=0)$ and $\P^0=[1:0:...:0]$.
For $t\in \C^*$, set
$$
\phi_t([z_0:z_1:...:z_{n+1}])=[tz_0:z_1:...:z_{n+1}].
$$
For $t\in \C^*$,
the map $\phi_t:\P^{n+1}\to \P^{n+1}$ is an automorphism,
and hence it induces an automorphism
$$
\phi_t: C_{p+1,d}(\P^{n+1}) \to C_{p+1,d}(\P^{n+1}).
$$
It is clear that $\phi_1$ is the identity map.
Moreover, $\phi_t$ preserves
the subspace $T_{p+1,d}(\P^{n+1})$ and it is identity on $\Sigma C_{p,d}(\P^{n}) $
for all $t\in \C^*$.
Here
$$
T_{p+1,d}(\P^{n+1}):=\big\{c=\sum n_iV_i\in C_{p+1,d}(\P^{n+1})|\dim(V_i\cap\P^n)=p, ~\forall i\big \}
$$
for any non-negative integer $p$ and $d$.
(When $d=0$, $C_{p,0}(\P^n)$ is defined to be the empty cycle.)

\begin{proposition}\label{Prop3.2}
The set $T_{p+1,d}(\P^{n+1})$ is Zariski open in $C_{p+1,d}(\P^{n+1})$.
Moreover, $T_{p+1,d}(\P^{n+1})$ is homotopy equivalent to $C_{p,d}(\P^{n})$.
For each $c\in T_{p+1,d}(\P^{n+1})$, the limit
$$
\phi_{\infty}(c):=\lim_{t\to \infty}\phi_t(c)\in \Sigma C_{p,d}(\P^{n})
$$
exists. Moreover,
there is an algebraic map
$$
\Phi: T_{p+1,d}(\P^{n+1})\times\C\to T_{p+1,d}(\P^{n+1})
$$
such that $\Phi_{|T_{p+1,d}(\P^{n+1})\times \{1\}}=\phi_1$
and $\Phi_{|T_{p+1,d}(\P^{n+1})\times \{\infty\}}=\phi_{\infty}$,
where $\C=\C^*\cup \{\infty\}$.
This implies that $\phi_1(Z)=Z$ is rationally equivalent to $\phi_{\infty}(Z)$
if $Z$ is an (effective) cycle on $C_{p+1,d}(\P^{n+1})$
supported in $T_{p+1,d}(\P^{n+1})$.
\end{proposition}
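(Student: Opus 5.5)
The plan follows Lawson's treatment in \cite{Lawson1}: check openness by upper semicontinuity, compute the limit $\phi_\infty$ explicitly through equations, and obtain $\Phi$ by extending the $\C^*$-action over $t=\infty$ via the graph closure.

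\emph{Openness.} The complement of $T_{p+1,d}(\P^{n+1})$ is the set of cycles having some component $V_i$ contained in $\P^n$. Consider the incidence correspondence between $C_{p+1,d}(\P^{n+1})$ and $\P^{n+1}$ given by supports. For each splitting $d=d_1+d_2$ with $d_1\geq 1$, the image of the addition map
$$C_{p+1,d_1}(\P^n)\times C_{p+1,d-d_1}(\P^{n+1})\to C_{p+1,d}(\P^{n+1})$$
is closed, since the source is projective and the map is a morphism. The complement of $T_{p+1,d}(\P^{n+1})$ is exactly the finite union of these images, so it is Zariski closed.

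\emph{The limit $\phi_\infty$.} I will reduce to irreducible $V$ with $V\not\subset\P^n$ and extend additively; $\phi_t$ acts on $C_{p+1,d}(\P^{n+1})$ by a monoid automorphism. Take $V\in T$ irreducible. Write each defining form as $f=\sum_k z_0^k f_k(z_1,\dots,z_{n+1})$. Then $\phi_t(V)$ is cut out by $f(t^{-1}z_0,z')$. As $t\to\infty$, the flat limit is cut out by the lowest-$z_0$-order parts. Geometrically, the points of $V$ off $\P^0$ flow toward $\P^n$ along lines through $\P^0$, and the limit is the cone over the limiting cycle in $\P^n$. More precisely, the limit is $\Sigma(V\cdot\P^n)$, where $V\cdot\P^n$ is the proper intersection cycle, of dimension $p$ and degree $d$ because $\dim(V\cap\P^n)=p$. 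This is Lawson's computation. Properness of $C_{p+1,d}(\P^{n+1})$ guarantees that the limit exists.

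\emph{The map $\Phi$ and homotopy equivalence.} The orbit map $T\times\C^*\to T$, $(c,t)\mapsto\phi_t(c)$, is algebraic. I define $\Phi$ on $T\times(\C^*\cup\{\infty\})\cong T\times\A^1$, with coordinate $s=1/t$, by sending $(c,\infty)\mapsto\Sigma(c\cdot\P^n)$. I will prove this extension is algebraic by showing that the closure of the graph of the action in $T\times\A^1\times C_{p+1,d}(\P^{n+1})$ is a graph over $T\times\A^1$. The key input is that $c\mapsto c\cdot\P^n$ is a morphism on $T$, since intersection with a hyperplane is continuous and algebraic on cycles meeting it properly.

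The homotopy $\Phi$ deformation retracts $T$ onto $\Sigma C_{p,d}(\P^n)$, which lies in $T$ and is fixed by every $\phi_t$. Since $\Sigma$ is injective, this gives $T\simeq C_{p,d}(\P^n)$.

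\emph{Rational equivalence.} Let $Z$ be a cycle on $C_{p+1,d}(\P^{n+1})$ supported in $T$. Apply the flat pullback $\Phi^*$ to get a family over $\A^1$. Its fibers at $s=1$ and $s=0$ are $\phi_1(Z)=Z$ and $\phi_\infty(Z)$, so these are rationally equivalent by \cite{Fulton}, Ch.~1. One caveat is that $\Phi^*$ must be flat over the base; alternatively, I can take the closure of $\{(\phi_t(z),t)\}$ over $\A^1$ and use the fact that the specialization fiber is $\phi_{\infty*}Z$, with multiplicities computed via the push-forward.

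\emph{Main obstacle.} The hardest step is the algebraicity of $\Phi$ at $t=\infty$, that is, that the pointwise limit glues into a morphism rather than merely a continuous map. Since Chow varieties may be non-normal, I would try first to work with seminormalizations or to state everything at the level of constructible algebraic maps with closed graph, which suffices for pushforward of cycles and for rational equivalence.
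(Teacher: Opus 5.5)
Your route is the paper's, which is Lawson's \cite[\S 4]{Lawson1}: openness of $T_{p+1,d}(\P^{n+1})$, the limit $\phi_\infty(c)=\Sigma(c\cdot\P^n)$, and $\Phi$ obtained from the closure of the graph of $(c,t)\mapsto\phi_t(c)$. The paper simply quotes single-valuedness of that closure over $T_{p+1,d}(\P^{n+1})\times\C$ from \cite[Rmk.~4.6]{Lawson1}. Your openness and deformation-retraction arguments are fine and supply what the paper only cites.

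\textbf{The gap.} At the step you call the main obstacle, the input you name is not enough. The fibre of the graph closure over $(c,\infty)$ consists of all limits $c'$ of $\phi_{t_k}(c_k)$ with $c_k\to c$ and $t_k\to\infty$ \emph{jointly}. The pointwise formula plus continuity of $c\mapsto c\cdot\P^n$ on $T$ says nothing about such joint limits; it does not even guarantee $c'\in T$. The missing step is that every such $c'$ is a cone:
\begin{itemize}
\item Let $x\neq\P^0$ lie in $|c'|$. Since supports of convergent cycles converge in the Hausdorff sense, $x=\lim\phi_{t_k}(y_k)$ with $y_k\in|c_k|$.
\item Normalize $|y_k'|=1$. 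The first coordinate $t_ky_{k,0}$ of $\phi_{t_k}(y_k)$ must stay bounded, otherwise the limit would be $\P^0$. Hence $y_{k,0}\to 0$ and $y_k\to\pi(x)$, where $\pi$ is projection from $\P^0$ to $\P^n$.
\item Therefore $\pi(x)\in|c|\cap\P^n$, so $|c'|\subset\Sigma(|c|\cap\P^n)$. This cone has pure dimension $p+1$ because $c\in T$, hence $c'\in\Sigma C_{p,d}(\P^n)\subset T$.
\end{itemize}
Only now does your key input apply. Since $\phi_t$ fixes $\P^n$ pointwise, $\phi_{t_k}(c_k)\cdot\P^n=c_k\cdot\P^n$. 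Continuity of intersection on $T$ then gives $c'\cdot\P^n=c\cdot\P^n$, i.e.\ $c'=\Sigma(c\cdot\P^n)$.

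\textbf{Two smaller corrections.}

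First, as $t\to\infty$ one has $[tz_0:z']=[z_0:t^{-1}z']\to\P^0$. So points flow to the vertex, not to $\P^n$. Flowing to $\P^n$ is the $t\to 0$ degeneration, which projects $V$ into $\P^n$ and leaves $T$. Your lowest-$z_0$-order description and the formula $\Sigma(V\cdot\P^n)$ are nonetheless correct.

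Second, discard the flat pull-back. $\Phi^*Z$ parametrizes $\phi_t^{-1}(Z)$, i.e.\ it runs the flow backwards, and its fibre over $\infty$ is $\phi_\infty^{-1}(Z)$. Moreover $\Phi$ is in general not flat. Your alternative is the right one, and it is what the paper's use of $\overline{\Gamma_{\Phi}}$ amounts to:
\begin{itemize}
\item Let $\mathcal{W}$ be the closure of $\{(\phi_t(z),t)\}$ in $C_{p+1,d}(\P^{n+1})\times\P^1$. Its fibres over $1$ and $\infty$ are rationally equivalent by \cite[Prop.~1.6]{Fulton}.
\item Single-valuedness over $Z\times(\C^*\cup\{\infty\})$, together with compatibility of specialization with proper push-forward, identifies the fibre over $\infty$ with $\phi_{\infty*}Z$.
\end{itemize}
As you note, single-valuedness only gives a bijective proper projection from the graph, i.e.\ an algebraic map in Lawson's sense rather than a morphism. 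That is all this argument needs.
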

\begin{proof}
The essential idea of this proposition
can be traced back to Fulton (see \cite[Chap. 5]{Fulton} and \cite[Remark 4.6]{Lawson1}).
It has been shown \cite[\S 4]{Lawson1} that
$T_{p+1,d}(\P^{n+1})$  is Zariski open in $C_{p+1,d}(\P^{n+1})$
and $\phi_{\infty}(c)$ exists for each $c\in T_{p+1,d}(\P^{n+1})$.

Note that $\C=\C^*\cup \{\infty\}$, and we consider the morphism
$$
\Phi:C_{p+1,d}(\P^{n+1})\times \C^*\to C_{p+1,d}(\P^{n+1}),
$$
defined as $\Phi(c,t)=\phi_t(c)$.
The Zariski closure of the graph of $\Phi$
in $$(C_{p+1,d}(\P^{n+1})\times \C)\times C_{p+1,d}(\P^{n+1})$$
is denoted by $\overline{\Gamma_{\Phi}}$,
and the key point is that $\overline{\Gamma_{\Phi}}$ is single-valued
over $$T_{p+1,d}(\P^{n+1})\times \C\subset  C_{p+1,d}(\P^{n+1})\times \C$$ (see \cite[Rmk. 4.6]{Lawson1}).
This implies that if $Z$ is an (effective) algebraic cycle
on $C_{p+1,d}(\P^{n+1})$ supported in $T_{p+1,d}(\P^{n+1})$,
$\overline{\Gamma_{\Phi}}_{|(T_{p+1,d}(\P^{n+1})\times \C)}$ gives a rational equivalence between
$\phi_1(Z)=Z$  and $\phi_{\infty}(Z)$ whose support lies in $\Sigma C_{p,d}(\P^{n})$.
\end{proof}

\begin{remark}
For a projective algebraic variety $Z$  on $C_{p+1,d}(\P^{n+1})$
whose support is in $T_{p+1,d}(\P^{n+1})$,
we have the following morphisms
$$
Z\times\C^*\to T_{p+1,d}(\P^{n+1})\times \C^*\to C_{p+1,d}(\P^{n+1})\times \C^*\to C_{p+1,d}(\P^{n+1}).
$$
The morphism extends to a well-defined  algebraic map
(i.e., the graph is an algebraic subset in of the product) (see \cite[p.280]{Lawson1})
$$
\phi_{Z}:Z\times\C\to T_{p+1,d}(\P^{n+1})\times \C\to T_{p+1,d}(\P^{n+1}),
$$
whose image $W:=\phi_Z(Z\times\C)$,
where $\C=\C^*\cup \{\infty\}$.
The rational function field $K(W)$ of $W$ coincides with $K(Z\times \C^*)$.
Let $g\in K(W)$ be the element corresponding $t-1\in K(Z\times \C^*)$,
where $t$ is the local coordinate on $\C=\C^*\cup \{\infty\}$.
Then $div(g)=Z- \phi_{\infty}(Z)$.
\end{remark}

Now we study when an (effective) algebraic cycle $Z$ on $C_{p+1,d}(\P^{n+1})$,
under the sequence of embeddings as in Equation \eqref{eqn2.1},
can be rationally equivalent to an algebraic cycle $Z'$
in $C_{p+1,d'}(\P^{n+1})$ for large $d'$
such that the support of $Z'$ lies in $T_{p+1,d'}(\P^{n+1})$.

Fix a linear embedding $\P^{n+1}\subset \P^{n+2}$ and two points $x_0,x_1\in \P^{n+2}-\P^{n+1}$.
Each projection $p_i:\P^{n+2}-\{x_i\}\to \P^{n+1}$ ($i=0,1$)
gives us an algebraic line bundle over $\P^{n+1}$.

Let $D\in C_{n+1,e}(\P^{n+2})$ be an effective divisor of degree $e$ in $\P^{n+2}$
such that both $x_0, x_1$ are not in $D$.
Denote by $\widetilde{Div_e}(\P^{n+2})\subset C_{n+1,e}(\P^{n+2})$ the subset of all
such $D$.

Any effective $(p+1)$-cycle $c$ of degree $d$ on $\P^{n+1}$,
i.e.,
$c\in C_{p+1,d}(\P^{n+1})$,
 can be lifted to a cycle with support in $D$,
 defined as follows:
$$
\Psi_D(c)=(\Sigma_{x_0}c)\cdot D.
$$
The map $\Psi(c, D):=\Psi_D(c)$ is a continuous map with variables $c$ and $D$.
Hence we have a continuous map $\Psi_D:C_{p+1,d}(\P^{n+1})\to C_{p+1,de}(\P^{n+2}-\{x_0,x_1\})$,
which is in fact a morphism.
The composition of $\Psi_D$ with the projection $(p_0)_*$
is $(p_0)_*\circ\Psi_D=e$ (where $e\cdot c=c+\cdots+c$ for $e$ times).
The composed map of $\Psi_D$ with the projection $(p_1)_*$
gives us a transformation of cycles in $\P^{n+1}$
which makes most of them intersecting properly to $\P^{n}$.
To see this,
we consider the family of divisors $tD$, $t\in \C$,
given by  the scalar multiplication by $t$ in the line bundle
$p_0:\P^{n+2}-\{x_0\}\to \P^{n+1}$.

Assume $x_1$ is not in $tD$ for all $t\in \C$.
Then the above construction gives us a family transformation
$$
F_{tD}:=(p_1)_*\circ \Psi_{tD}: C_{p+1,d}(\P^{n+1})\to C_{p+1,de}(\P^{n+1})
$$
for $t\in \C$.
Note that $F_{0D}\equiv e$ (multiplication by $e$).

The question is that for a fixed $c$,
which divisors $D\in C_{n+1,e}(\P^{n+2})$
($x_0$ is not in $D$ and $x_1$ is not in $\bigcup_{0\leq t\leq 1} tD$) have the property that
$$
F_{tD}(c)\in T_{p+1,de}(\P^{n+1})
$$
for all $t\in \C^*$?
Set
$$
B_c:=\{D\in C_{n+1,e}(\P^{n+2})|F_{tD}(c) ~\hbox{is not in $T_{p+1,de} (\P^{n+1})$ for some  $t\in \C^*$}\},
$$
i.e., all divisors with degree $e$ on $\P^{n+2}$ such that some component of
\begin{equation*}
 (p_1)_*\circ \Psi_{tD}(c)\subset \P^n
\end{equation*}
for some $t\in \C^*$.
An important calculation we will use later is the following result.
\begin{proposition}[\cite{Lawson1}, Lemma 5.11]\label{Prop3.4}
For $c\in C_{p+1,d}(\P^{n+1})$, ${\rm codim}_{\C}B_c\geq \binom{p+e+1}{e}$.
\end{proposition}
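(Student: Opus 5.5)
The plan is to reduce to irreducible cycles and then turn the condition ``some component of $F_{tD}(c)$ lies in $\P^n$'' into a containment condition on the divisor $tD$. Since $\Psi_{tD}$ and $(p_1)_*$ are additive, for $c=\sum n_iV_i$ we have $B_c=\bigcup_i B_{V_i}$. A finite union does not lower codimension, so it suffices to treat an irreducible $(p+1)$-dimensional $V\subset\P^{n+1}$.

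\emph{Reformulating the bad condition.} Let $H_1\subset\P^{n+2}$ be the cone with vertex $x_1$ over $\P^n$. This is a hyperplane, and $p_1^{-1}(\P^n)=H_1-\{x_1\}$. A component of $(p_1)_*\Psi_{tD}(V)$ lies in $\P^n$ exactly when some $(p+1)$-dimensional component of $\Sigma_{x_0}V\cap tD$ lies in $H_1$. Because $\Sigma_{x_0}V$ is irreducible of dimension $p+2$ and $x_0\notin H_1$, the intersection $\Sigma_{x_0}V\cap H_1$ is a proper hyperplane section. It is therefore of pure dimension $p+1$, with finitely many components $W_1,\dots,W_r$. (If $V\subset\P^n$ the same description holds, with $W$ the appropriate cone component.) Hence $D\in B_V$ if and only if $tD\supset W_j$ for some $j$ and some $t\in\C^*$. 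So $B_V$ is the finite union over $j$ of the $\C^*$-saturations of the sets $\{D: D\supset W_j\}$.

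\emph{Counting conditions.} In affine coordinates for the line bundle $p_0$, write $D=\{f=0\}$, normalized so that the coefficient of $z_{n+2}^e$ is $1$; this is possible because $x_0\notin D$. The condition $D\supset W$ says that the restriction $f|_W$ vanishes, which is a linear condition on the coefficients of $f$. Its codimension equals the rank of the restriction map from degree-$e$ forms to $H^0(W,\O(e))$. Since $W$ is a $(p+1)$-dimensional projective variety, a generic linear projection $W\to\P^{p+1}$ is finite and surjective. Pulling back shows that the degree-$e$ forms on $\P^{p+1}$ inject into the image of the restriction map, so the rank is at least $\binom{p+1+e}{e}=\binom{p+e+1}{e}$. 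The normalization does not interfere: $W$ is not contained in the locus $z_{n+2}=0$ cut out by the monic term, so the constraint stays affine-linear of the same rank.

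\emph{The $\C^*$-saturation (main obstacle).} Sweeping by $t\in\C^*$ could a priori cost one dimension, giving only $\binom{p+e+1}{e}-1$. I would handle this by looking at how scaling acts on coefficients: $tD$ is defined by $f(z',z_{n+2}/t)$, so the coefficients are rescaled with weights $t^{-k}$ according to their degree $k$ in $z_{n+2}$. The plan is to show that the $\C^*$-orbit of the linear space $L_W=\{f: f|_W=0\}$ has the same dimension as $L_W$. One route is to verify that $L_W$ is $\C^*$-stable up to a finite ambiguity, or else that the condition is genuinely on the weight-graded pieces. An alternative route is to exhibit one extra independent condition coming from the limit $t\to0$, where $tD\to e\cdot\P^{n+1}$, and the limit $t\to\infty$. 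This is exactly the point where Lawson's Lemma 5.11 is delicate, and I expect it to be the hardest step. Once it is settled, each saturated piece has codimension at least $\binom{p+e+1}{e}$, and the finite union gives ${\rm codim}_\C B_c\geq\binom{p+e+1}{e}$.
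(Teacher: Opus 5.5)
\textbf{The step you call the main obstacle cannot be carried out, because the statement is false for the $\C^*$-saturated set.} Your other steps are correct: the reduction to irreducible $V$, the reformulation ($D\in B_V$ iff $tD\supset W_j$ for some component $W_j$ of $\Sigma_{x_0}V\cap H_1$ and some $t\in\C^*$; for $V\subset\P^n$ one has $W=V$, which is $\C^*$-fixed), and the bound ${\rm codim}\{D: D\supset W\}\geq\binom{p+e+1}{e}$ via a finite projection $W\to\P^{p+1}$.

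With $B_c$ saturated over $t\in\C^*$, as the paper defines it, neither $\C^*$-stability of $L_W$ nor an extra condition from the limits $t\to 0,\infty$ can rescue the bound. If $W_j\not\subset\P^n$, the varieties $tW_j\subset tH_1$ are pairwise distinct, since the $tH_1$ are distinct hyperplanes of the pencil through $\P^n$. When $W_j$ is linear (for instance $V$ a linear $\P^{p+1}\not\subset\P^n$), the sets $\{D\supset tW_j\}$ are pairwise distinct linear spaces of codimension exactly $\binom{p+e+1}{e}$. Their union therefore has codimension exactly $\binom{p+e+1}{e}-1$.

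Here is a concrete instance. Take $p=0$, $n=1$, $e=1$, $x_0,x_1$ general, and $V\neq\P^1$ a line in $\P^2$ with $p_0(x_1)\notin V$.
\begin{itemize}
\item $\Pi=\Sigma_{x_0}V$ is a plane, and $W=\Pi\cap H_1$ is a line through $P=V\cap\P^1$.
\item $\{tW\}_{t\in\C^*}$ is the pencil of lines of $\Pi$ through $P$, minus $V$ and the line $\overline{x_0P}$.
\item Any plane $D\not\ni x_0$ with $P\in D$ and $V\not\subset D$ satisfies $D\cap\Pi=sW$ for some $s\in\C^*$. Hence $s^{-1}D\cap\Pi=W$ and $F_{s^{-1}D}(V)=\P^1\notin T_{1,1}(\P^2)$.
\end{itemize}
So $B_V$ contains a dense open subset of the hyperplane $\{D\ni P\}$ in $C_{2,1}(\P^3)$. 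Thus ${\rm codim}\,B_V=1<2=\binom{2}{1}$. The same happens inside the locus of $D$ with $x_1\notin tD$ for all $t$.

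What does hold, and what your counting already proves, is the bound for the unsaturated set $\{D: F_D(c)\notin T_{p+1,de}(\P^{n+1})\}$. With $t=1$ fixed, $D$ is bad iff $D\supset W_j$ for some $j$, and your Hilbert-function count gives codimension $\geq\binom{p+e+1}{e}$. The $\C^*$-sweep then costs at most one dimension, so the saturated set has ${\rm codim}\geq\binom{p+e+1}{e}-1$, and this is sharp.

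The unsaturated version is also the one the paper actually uses. Proposition~\ref{Prop3.5} forms $B(V)=\bigcup_{c\in V,\,t\in\C^*}t\cdot B_c$ and subtracts $q+1$, where the extra $1$ pays for the $t$-sweep. That only makes sense if $B_c$ is not already $\C^*$-stable. The paper gives no argument for this statement beyond citing Lawson's Lemma~5.11.

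So do not try to remove the $-1$. Either read $B_c$ as the unsaturated set, or state the saturated bound as $\binom{p+e+1}{e}-1$. Either way your proof is complete.
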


We have the following result.
\begin{proposition}\label{Prop3.5}
Let $i_V:V\subset C_{p+1,d}(\P^{n+1})$ be a subvariety of dimension  $q$.  Then for each $e\in \Z\geq 0$
such that $ \binom{p+e+1}{e}>q+1$, there exists a divisor $D\in \widetilde{Div_e}(\P^{n+2})$ such that
$e\cdot V$ and $F_D (V)\subset T_{p+1,de}(\P^{n+1})$ are rationally equivalent as
algebraic cycles on $C_{p+1,de}(\P^{n+1})$.
\end{proposition}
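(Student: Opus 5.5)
The plan is to use the one-parameter family of divisors $tD$, $t\in\C$, in the line bundle $p_0:\P^{n+2}-\{x_0\}\to\P^{n+1}$. This family interpolates between $F_{0D}=e$ (multiplication by $e$) and $F_{1D}=F_D$. We will build from it a rational equivalence on $C_{p+1,de}(\P^{n+1})$.

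\textbf{Choosing $D$.} Consider the incidence set
$$
B_V:=\{D\in C_{n+1,e}(\P^{n+2})\mid D\in B_c \text{ for some } c\in V\}.
$$
It is the image of the incidence correspondence $\{(c,D)\mid c\in V,\ D\in B_c\}\subset V\times C_{n+1,e}(\P^{n+2})$. By Proposition \ref{Prop3.4}, each fiber over $c\in V$ has codimension at least $\binom{p+e+1}{e}$. Hence $B_V$ has codimension at least $\binom{p+e+1}{e}-q$ in the space of divisors.

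If $B_c$ is only constructible rather than closed, we pass to the Zariski closure; this does not change the dimension count. The extra $+1$ in the hypothesis $\binom{p+e+1}{e}>q+1$ gives room to absorb one more parameter, namely $t$. Indeed, $D\notin B_V$ should mean $F_{tD}(c)\in T$ for all $t\in\C^*$. We also want the whole line $\{tD\}$ to avoid the bad locus, together with the conditions that $x_0\notin D$ and $x_1\notin tD$ for all $t$. These last two are closed conditions of positive codimension. The locus of $D$ whose $\C^*$-orbit meets $B_V$ has dimension at most $\dim B_V+1$, so it is still a proper subset. We therefore pick $D$ in the nonempty open complement.

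\textbf{Constructing the rational equivalence.} With $D$ fixed, define
$$
G:V\times\C\to C_{p+1,de}(\P^{n+1}),\qquad G(c,t)=F_{tD}(c).
$$
This is algebraic, since $\Psi$ is a morphism in $(c,D)$ and $(p_1)_*$ is algebraic on cycles avoiding $x_1$. Let $W=G(V\times\C)$, or better the pushforward cycle $G_*[V\times\C]$ in the sense of \cite[Ch.1]{Fulton}. Take the rational function $t$ (or $t-1$), and restrict to the fibers $t=0$ and $t=1$. Then $G_*(\mathrm{div}(t-1))-G_*(\mathrm{div}(t))$ is rationally equivalent to zero, since proper pushforward preserves rational equivalence. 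Here we should compactify $t$ to $\P^1$, or check properness; see the obstacle below.

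The fiber at $t=0$ is $F_{0D}(V)=e\cdot c$, i.e.\ the cycle $i_e(V)$ obtained by the map $c\mapsto ec$. The fiber at $t=1$ is $F_D(V)$, which lies in $T_{p+1,de}(\P^{n+1})$ by the choice of $D$. This gives $e\cdot V\sim F_D(V)$, interpreting both sides as pushforwards with the appropriate degrees of the maps on $V$.

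\textbf{Main obstacle.} The delicate points are properness and the correct meaning of ``$e\cdot V$''. The parameter space $\C$ is not complete, so we would instead use $t\in\P^1$ and need $G$ to extend, or use only the affine line together with the fact that $\mathbb{A}^1$-homotopic cycles are rationally equivalent. The latter works: for a flat family over $\mathbb{A}^1$, the fibers over two points are rationally equivalent \cite[Ex.~10.4]{Fulton}, because $V\times\mathbb{A}^1$ is proper over $\mathbb{A}^1$ and the map is defined on it. The second point is that the map $c\mapsto ec$ is injective, so the image of $V$ under $t=0$ is isomorphic to $V$. For $F_D$, we must check that it is generically finite on $V$, or else count it as zero; the statement should be read with pushforward cycles. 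Verifying the dimension count for $B_V$ with the extra $t$-direction is the step I expect to need the most care.
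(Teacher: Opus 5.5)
Your route is the paper's: the same bad set (the paper's $B(V)=\bigcup_{c\in V,\,t\in\C^*}t\cdot B_c$ is your $\C^*$-saturated $B_V$), the same bound ${\rm codim}\geq\binom{p+e+1}{e}-q-1>0$ from Proposition~\ref{Prop3.4}, and the same family $(c,t)\mapsto F_{tD}(c)$ on $V\times\C$ joining $t=0$ to $t=1$. Your reading of both sides as pushforwards, $(\Phi_{p+1,d,n+1,e})_*[V]$ (along $c\mapsto e\cdot c$) and $(F_D)_*[V]$, is the right one and is sharper than the paper.

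There is one false step, and the paper takes it for granted too (``Assume $x_1$ is not in $tD$ for all $t\in\C$''; ``$f$ \dots is defined everywhere on $V\times\C$''). You assert that the failure of ``$x_1\notin tD$ for all $t$'' is a closed condition of positive codimension on $D$. It is not. Let $L$ be the line through $x_0$ and $x_1$. Then $L\setminus\{x_0\}$ is the fibre of $p_0$ over $x_1':=p_0(x_1)$, and $x_1\in tD$ iff $t^{-1}x_1\in D\cap L$. Since $x_0\notin D$, $D\cap L$ is a degree-$e$ zero-cycle in that fibre. So ``$x_1\notin tD$ for all $t\in\C^*$'' forces $D\cap L=e\,x_1'$, which is a closed condition of codimension $e$. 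A generic $D$ therefore has $e$ values $t\in\C^*$ (with multiplicity) with $x_1\in tD$. At such $t$, for every $c\in V$ with $x_1'\in|c|$, the cycle $\Sigma_{x_0}c\cdot tD$ contains $x_1$, so $(p_1)_*$ is undefined there. You cannot move $x_1$ off $\bigcup_{c\in V}|c|$ in general either: for $p=n-1$ and $V$ a pencil of hypersurfaces, this union is all of $\P^{n+1}$. Hence $G$ is only a rational map on $V\times\C$. Your $\A^1$ argument needs a morphism on $V\times\A^1$, so it does not apply as written.

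The repair is short. Impose only $x_0,x_1\notin D$ (open conditions) and $F_D(c)\in T_{p+1,de}(\P^{n+1})$ for all $c\in V$. The latter is the bad condition at $t=1$ alone, and your count shows it can be avoided. Since $x_1\notin e\P^{n+1}=0\cdot D$ and $x_1\notin D$, $G$ is a morphism on an open set containing $V\times\{0\}$ and $V\times\{1\}$. Let $\Gamma\subset V\times\P^1\times C_{p+1,de}(\P^{n+1})$ be the closure of its graph. Then $\Gamma\cong V\times\P^1$ over neighbourhoods of $t=0$ and $t=1$. So the pullback of $t/(t-1)$ to $\Gamma$ has divisor $[V\times\{0\}]-[V\times\{1\}]$. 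Pushing forward along the proper projection to $C_{p+1,de}(\P^{n+1})$ gives $(\Phi_{p+1,d,n+1,e})_*[V]\sim(F_D)_*[V]$. This also settles your properness concern, and it repairs the paper's proof in the same way.
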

\begin{proof}

For each $d, e>0$ and $p\leq n$,
there is a rational map
$$
\Psi:C_{p+1,d}(\P^{n+1})\times \widetilde{Div_e}(\P^{n+2})\to C_{p+1,de}(\P^{n+1})
$$
given by intersection (see \cite[Prop. 3.4-3.5]{Friedlander1}).

Now for any  integer $e$ satisfying $ \binom{p+e+1}{e}>q+1$, define the subset
$$
B(V)=\bigcup_{c\in V, t\in \C^*} t\cdot B_c,
$$
of $\widetilde{Div_e}(\P^{n+2})$,
where $ t\cdot B_c=\{tD|D\in B_c\}$.
By Proposition \ref{Prop3.4},
we have
$$
{\rm codim}_{\C} B(V)\geq  \binom{p+e+1}{e}-q-1>0.
$$
Hence there exists a $D\in \widetilde{Div_e}(\P^{n+2})-B(V)$ such that
$$
f:V\times \C\to C_{p+1,de}(\P^{n+1})
$$
is the restriction of the above $\Psi$, and it is defined everywhere on $V\times \C$.
This is to say,  $f(x,t)=F_{tD}(x)$.
This $f$ gives the rational equivalence
between $e\cdot V$ and $F_D(V)$ as elements in $C_{p+1,de}(\P^{n+1})$,
i.e., $e\cdot V$ is rationally equivalent to an element $F_D(V)$
supported in the open subset $T_{p+1,de}(\P^{n+1})$.
\end{proof}

We also need  the following result in the proof of Theorem \ref{Thm2.1}.
\begin{lemma}\label{lemma3.6}
For any $d>0$ and $0\leq p\leq n$,
the cycle class map
$$
cl_{\Q}:\Ch_q(\sp^d\P^{n})_{\Q}\to H_{2q}(\sp^d\P^{n},\Q)
$$
is an isomorphism.
\end{lemma}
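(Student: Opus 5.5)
The idea is to realize $\sp^d\P^n$ as a finite quotient $(\P^n)^d/S_d$ and transfer the statement from the smooth variety $(\P^n)^d$, where everything is known. Let $\pi:(\P^n)^d\to \sp^d\P^n$ be the quotient morphism; it is finite and surjective of degree $d!$.

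First, for rational Chow groups of a quotient of a quasi-projective variety $X$ by a finite group $G$, Fulton's Example 1.7.6 gives an isomorphism
$$\Ch_q(X/G)_\Q\cong \Ch_q(X)_\Q^G.$$
The two directions are realized as follows.
\begin{itemize}
\item The map $\pi_*$ is restricted to invariants.
\item The inverse is $\frac{1}{|G|}\pi^*$, where $\pi^*$ is defined on cycles by taking the cycle-theoretic preimage (flat pullback is not available, but for finite quotients the pullback is well defined up to rational equivalence with $\Q$ coefficients).
\end{itemize}
The analogous statement in topology is the transfer theorem for finite group actions:
$$H_{2q}(X/G,\Q)\cong H_{2q}(X,\Q)^G.$$
This holds because $X/G$ is a finite CW quotient, and it can be proved with the transfer map $\tau$ satisfying $\pi_*\tau=|G|$ and $\tau\pi_*=\sum_{g}g_*$.

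Second, I check compatibility. The cycle class map $cl$ commutes with proper pushforward, so there is a commutative square relating $\pi_*$ on Chow groups and on homology. The cycle class map is also equivariant for the $S_d$ action by automorphisms of $(\P^n)^d$, since these are proper morphisms. Hence $cl_\Q$ on $\sp^d\P^n$ is identified with the restriction to $S_d$-invariants of
$$cl_\Q:\Ch_q((\P^n)^d)_\Q\to H_{2q}((\P^n)^d,\Q).$$
The only point to verify is that $\pi_*$ maps invariants isomorphically on both sides, and this suffices; one does not need to match $\pi^*$ with $\tau$. Surjectivity of $\pi_*$ onto $\Ch_q(\sp^d)_\Q$ follows from the identity $\pi_*\pi^*=d!$. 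Injectivity on invariants follows from $\pi^*\pi_*(z)=\sum_g g_*z$, which equals $d!\,z$ for invariant $z$. These two identities, on the Chow side, are the main technical step; I expect them to be the main obstacle. I would verify them on cycles directly: the preimage of a subvariety $W\subset X/G$ is the union of the $G$-translates of one component, with multiplicities chosen so that the pushforward of the preimage is $|G|\,W$.

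Third, for $(\P^n)^d$ itself the cycle class map is an integral isomorphism. The variety $(\P^n)^d$ has a cellular (affine) decomposition given by products of the standard cells. For such varieties, Fulton's Example 19.1.11 shows that $\Ch_*$ is freely generated by the closures of the cells and that $cl$ is an isomorphism onto $H_{2*}$, while odd homology vanishes. Equivalently, one can use the Künneth formula for Chow groups of $\P^n\times Y$ together with the projective bundle formula. Taking $S_d$-invariants of this isomorphism and combining it with the first two steps yields that $cl_\Q$ on $\sp^d\P^n$ is an isomorphism for every $q$. The same argument also shows that this holds for all $q$, not only for $q\le d$.
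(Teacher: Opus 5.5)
Your proposal is correct and takes essentially the same route as the paper: you identify both the rational Chow groups and the rational homology of $\sp^d\P^n$ with the $S_d$-invariants of those of $(\P^n)^d$ (Fulton, Example 1.7.6, and the transfer), then conclude by naturality of the cycle class map. You also make explicit two points the paper leaves implicit: the comparison is through $\pi_*$ on both sides, and $cl$ is an isomorphism on $(\P^n)^d$ by the cellular decomposition (Fulton, Example 19.1.11).
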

\begin{proof}
For a projective variety $X$ with a finite group action $G$ such that $Y=X/G$,
we have $\Ch_q(Y)_{\Q}\cong \Ch_q(X)^{G}_{\Q}$ (see \cite[Example 1.7.6]{Fulton}),
and $H_{2q}(Y,{\Q})\cong H_{2q}(X,\Q)^{G}$.
If $\Ch_q(X)_{\Q}\cong H_{2q}(X,\Q)$,
then from the naturality of the cycle class map from Chow group to the singular homology group,
we get the isomorphism $\Ch_q(Y)_{\Q}\cong H_{2q}(Y,\Q)$.
Now applying $X=(\P^n)^d$ and the symmetric group $G=S_d$ of $d$ elements,
we obtain the Lemma.
\end{proof}


\begin{remark}
Lemma \ref{lemma3.6} is the basic case in our computation of the
Chow group of Chow varieties in rational coefficients.
Whether the cycle class map
$$
cl:\Ch_q(\sp^d\P^{n})\to H_{2q}(\sp^d\P^{n},\Z)
$$
is an isomorphism for all $d>0$ and $0\leq p\leq n$ is still open,
as the special case of conjecture \ref{conj1}.
Such a concise statement had been presumed ``trivially" true
until one was asked to give a proof.  It is interesting to show or disprove
this statement and it is worthwhile to  be formulated  as an independent conjecture.
\end{remark}

\begin{remark}
The Chow group (in rational coefficients) of infinite symmetric product of a smooth complete variety $X$
 have been defined in \cite{Kimura-Vistoli}.
The relations between the Chow group  of infinite symmetric product of $X$
and that of a finite symmetric product of $X$ are given by stability conjectures.
It seems that the only computable Chow groups of a finite symmetric product  is the case
that $X$ is a smooth projective curve.
\end{remark}

We need the following lemma to show Theorem \ref{Thm3.1}.
\begin{lemma}\label{lemma3.9}
For any $0\leq p'\leq n'$ and $q\geq 0$,
we have the following commutative diagram
\begin{equation}\label{eqn3.1}
\vcenter{
\xymatrix{&\Ch_q(\mC_{p'}(\P^{n'}))\ar[r]^-{cl}\ar@{->>}[d]^{\Sigma_*^c}&H_{2q}(\mC_{p'}(\P^{n'}),\Z)\ar[d]^{\Sigma_*^h}_{\cong}\\
&\Ch_q(\mC_{p'+1}(\P^{n'+1}))\ar[r]^-{cl}&H_{2q}(\mC_{p'+1}(\P^{n'+1}),\Z),
}
}
\end{equation}
where we write $\Sigma_*^c$ to denote the suspension map on Chow groups,
while $\Sigma_*^h$ for the suspension map on singular homology groups to avoid confusion.
Then the map $\Sigma_*^c:\Ch_q(\mC_{p'}(\P^{n'}))\to \Ch_q(\mC_{p'+1}(\P^{n'+1}))$  is surjective.
\end{lemma}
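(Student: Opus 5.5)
The plan is to prove the commutativity of \eqref{eqn3.1} first and then surjectivity of $\Sigma_*^c$. That $\Sigma_*^h$ is an isomorphism is the Lawson Suspension Theorem on homology of the completions \cite{Lawson1}, so only the left column needs work.

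\emph{Commutativity.} The suspension $\Sigma: C_{p',d}(\P^{n'})\to C_{p'+1,d}(\P^{n'+1})$ is a proper morphism (in fact a closed embedding). The cycle class map commutes with proper pushforward, so for each $d$ we get $cl\circ\Sigma_* = \Sigma_*\circ cl$. Since $\Sigma$ commutes with the embeddings $i$ of \eqref{eqn2.1}, these squares are compatible with the direct systems. Passing to direct limits gives \eqref{eqn3.1}.

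\emph{Surjectivity.} Let $\alpha\in \Ch_q(\mC_{p'+1}(\P^{n'+1}))$ be represented by a cycle on some $C_{p'+1,d}(\P^{n'+1})$. By linearity it suffices to treat an irreducible subvariety $V$ of dimension $q$.

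1. Choose $e$ with $\binom{p'+e+1}{e}>q+1$. By Proposition~\ref{Prop3.5}, $e\cdot V$ is rationally equivalent on $C_{p'+1,de}(\P^{n'+1})$ to $F_D(V)$, which is supported in $T_{p'+1,de}(\P^{n'+1})$.

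2. By Proposition~\ref{Prop3.2} (with the remark following it), $F_D(V)$ is rationally equivalent to $\phi_\infty(F_D(V))$, which is supported in $\Sigma C_{p',de}(\P^{n'})$. Since $\Sigma$ is a closed embedding, this cycle equals $\Sigma_*(W)$ for some cycle $W$ on $C_{p',de}(\P^{n'})$.

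3. It remains to pass from $e\cdot V$ back to $V$ itself. Here $e\cdot V$ means the image of $V$ under the multiplication-by-$e$ morphism $C_{p'+1,d}\to C_{p'+1,de}$, and this is not the same as $V$ pushed forward under the embedding $c\mapsto c+(de-d)l_0$. Two routes are available:
\begin{itemize}
\item Redo Proposition~\ref{Prop3.5} with $F_{tD}$ replaced by the family $c\mapsto F_{tD}(c)$ minus the constant part. More precisely, use the variant $c\mapsto (p_1)_*\big((\Sigma_{x_0}c)\cdot D\big)$ with $D=D_1+(e-1)H$, where $H$ is a hyperplane through $x_1$ chosen so that the extra summands degenerate to copies of $c$ plus a fixed cycle. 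Translating by the fixed cycle relates $V$ and $e\cdot V$ in the completion.
\item Alternatively, use the rational equivalence $t\mapsto F_{tD}$ at $t=0$ directly. This interpolates between $e\,c$ and $F_D(c)$. I would then show that $c\mapsto e\,c$ and $c\mapsto c+(e-1)c_0$ induce the same map on Chow groups of the completion. To do this, connect $(e-1)c$ to $(e-1)c_0$ by a rational curve in the (rationally connected) Chow variety, applied uniformly in families, using that $C_{p,d}(\P^n)$ is connected by chains of rational curves.
\end{itemize}

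I expect step 3 to be the main obstacle, and the fallback is as follows. If the integral statement resists, surjectivity up to multiplication by $e$ follows from steps 1--2. With the choice $e$ varying (for instance $e$ and $e+1$ are coprime, and both work once $e$ is large), that gives surjectivity after all. This suffices for the rational statements used later.

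Once steps 1--3 produce $\Sigma_*(W)$ representing $\alpha$ in $\Ch_q(\mC_{p'+1}(\P^{n'+1}))$, surjectivity of $\Sigma_*^c$ follows.
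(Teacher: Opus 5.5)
\paragraph{Where the proposal stands.}
Your commutativity argument and steps 1--2 are exactly the paper's proof: Proposition \ref{Prop3.5}, then Proposition \ref{Prop3.2}. The paper stops there and passes from ``$e\cdot Z$ is rationally equivalent to $Z''$'' straight to surjectivity. So the issue you isolate in step 3 is real, and the paper does not address it. However, none of your ways of closing it works, so the proposal has a genuine gap at step 3.

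\paragraph{Routes 1 and 2 cannot work.}
Write $\Phi_e(c)=ec$. Since $\Phi_e\circ i=i^{e}\circ\Phi_e$, it induces an endomorphism $(\Phi_e)_*$ of $\Ch_q(\mC_{p'+1}(\P^{n'+1}))$. Route 2 asserts that $(\Phi_e)_*$ agrees with the map induced by $c\mapsto c+(e-1)c_0$, and this is false.
\begin{enumerate}
\item Under the Chow-form embedding $c\mapsto R_c$ one has $R_{c+c'}=R_cR_{c'}$. So $i$ and all translations are restrictions of linear maps $R\mapsto R\,R_{c_0}$, while $\Phi_e$ is $R\mapsto R^e$; hence $\Phi_e^*\mathcal{O}(1)=\mathcal{O}(e)$.
\item By the projection formula, $\deg(\Phi_e)_*[V]=e\deg[V]$ for every curve $V$, whereas a translate of $V$ has degree $\deg[V]$. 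These degrees are compatible with the direct system.
\item So the two maps already differ on $\Ch_1$ (and on $H_2$), and no ``uniform chain of rational curves'' from $(e-1)c$ to $(e-1)c_0$ over $V$ can exist. This is precisely the failure of homotopy invariance the paper warns about.
\end{enumerate}
Route 1 does not escape this. Since $F_{0D}=e$ for every $D$, with $D=D_1+(e-1)H$ the family still starts at $c\mapsto ec$. Also, $H\ni x_1$ puts $x_1$ in $D$, which the construction of $(p_1)_*\circ\Psi_{tD}$ forbids. A family starting at $c\mapsto c+\mathrm{const}$ and ending in $T_{p'+1,\bullet}(\P^{n'+1})$ would be a genuinely new construction, which you have not given. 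And by the degree computation, no argument can identify $[\Phi_e(V)]$ with a translate of $[V]$.

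\paragraph{The fallback misreads steps 1--2.}
You read steps 1--2 as ``$e\alpha\in\mathrm{im}\,\Sigma^c_*$''. What they actually give is $(\Phi_e)_*\alpha\in\mathrm{im}\,\Sigma^c_*$, i.e.\ $\mathrm{im}\,(\Phi_e)_*\subseteq\mathrm{im}\,\Sigma^c_*$. But $(\Phi_e)_*$ is not multiplication by $e$:
\begin{enumerate}
\item The same computation gives $\deg(\Phi_e)_*[V]=e^q\deg[V]$ for $q$-dimensional $V$.
\item On $H_4(\mC_p(\P^n),\Q)$ with $n-p\geq 2$ it is not even a scalar. It acts by $e$ on the primitive class and by $e^2$ on the square of the degree-$2$ class.
\end{enumerate}
The trick with coprime $e$ and $e+1$ works for homotopy groups, where the $e$-fold sum is the $e$-th power map of an H-space and acts on $\pi_*$ by $e$. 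Nothing comparable is known for $(\Phi_e)_*$ on Chow groups. So steps 1--2 do not even show that the cokernel of $\Sigma^c_*$ is torsion.

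\paragraph{What is missing.}
You need surjectivity of $(\Phi_e)_*$ on $\Ch_q(\mC_{p'+1}(\P^{n'+1}))$ for some $e$ with $\binom{p'+e+1}{e}>q+1$. For the rational statement, surjectivity on $\Ch_q(\mC_{p'+1}(\P^{n'+1}))_{\Q}$ suffices. Given that, steps 1--2 finish the proof. It cannot be taken from Proposition \ref{Prop4.3}, for two reasons. Its proof relies on Theorem \ref{Thm3.1}, hence on this lemma. It also uses the same identification $e[V]=[e\cdot V]$.
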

\begin{proof}
Note that we have the following commutative diagram
\begin{equation*}
\xymatrix{&\Ch_q(\mC_{p'}(\P^{n'}))\ar[r]^-{cl}\ar[d]^{\Sigma_*^c}&H_{2q}(\mC_{p'}(\P^{n'}),\Z)\ar[d]^{\Sigma_*^h}_{\cong}\\
&\Ch_q(\mC_{p'+1}(\P^{n'+1}))\ar[r]^-{cl}&H_{2q}(\mC_{p'+1}(\P^{n'+1}),\Z),
}
\end{equation*}
where the right vertical map is an isomorphism by the Lawson suspension theorem (see \cite{Lawson1}).

By definition of the Chow group $\Ch_q(\mC_{p'+1}(\P^{n'+1}))$,
an element $[Z]\in \Ch_q(\mC_{p'+1}(\P^{n'+1}))$
comes from the image of the class of an algebraic cycle $Z$
on $\Ch_q(C_{p'+1,d}(\P^{n'+1}))$ for $d$ large.
For any algebraic cycle, we can write $Z=Z^+-Z^-$,
where both $Z^+$ and $Z^-$ are effective algebraic cycles.
Hence it suffices to prove the lemma for the class of effective cycles.

Applied Proposition \ref{Prop3.5} to the effective $q$-cycle $Z$ on $C_{p'+1,d}(\P^{n'+1})$,
we obtain that $e\cdot Z$ is rationally equivalent to another effective algebraic cycle
$Z'$ in $C_{p'+1,de}(\P^{n'+1})$ for sufficiently large  positive integer $e$.
Moreover, the support of $Z'$ lies in $ T_{p'+1,de}(\P^{n'+1})$.
By Proposition \ref{Prop3.2},
$Z'$ is rationally equivalent to an effective algebraic cycle $Z''$ whose support
lies in $\Sigma C_{p',de}(\P^{n'})$.
Hence $e\cdot Z$ is rationally equivalent to $Z''$.
By taking the limit $d\to \infty$ on $\Sigma_*:\Ch_q(C_{p',d}(\P^{n'}))\to \Ch_q(C_{p'+1,d}(\P^{n'+1}))$,
this implies that $\Sigma_*:\Ch_q(\mC_{p'}(\P^{n'}))\to \Ch_q(\mC_{p'+1}(\P^{n'+1}))$ is surjective
and completes the proof of the lemma.
\end{proof}

Now we can prove Theorem \ref{Thm3.1} and Theorem \ref{Thm2.1}.

\begin{proof}[Proof of Proposition \ref{Thm3.1} and Theorem \ref{Thm2.1}]
Note that there is a natural cycle class map $cl:\Ch_q(X)\to H_{2q}(X,\Z)$ for any projective variety $X$.
In particular,
we have the natural cycle class map $cl: \Ch_q(C_{p,d}(\P^{n}))\to H_{2q}(C_{p,d}(\P^{n}),\Z)$.

When $p=0$, Theorem \ref{Thm2.1} follows from Lemma \ref{lemma3.6}.
For $p>0$, set $p=p'+1$.
Recall that we have the following commutative diagram
\begin{equation*}
\xymatrix{&\Ch_q(\mC_{p'}(\P^{n'}))\ar[r]^-{cl}\ar@{->>}[d]^{\Sigma_*^c}&H_{2q}(\mC_{p'}(\P^{n'}),\Z)\ar[d]^{\Sigma_*^h}_{\cong}\\
&\Ch_q(\mC_{p'+1}(\P^{n'+1}))\ar[r]^-{cl}&H_{2q}(\mC_{p'+1}(\P^{n'+1}),\Z)
}
\end{equation*}
for any $0\leq p'\leq n'$ and $q\geq 0$.
In Lemma \ref{lemma3.9}, we have shown that $\Sigma_*^c$ is surjective.
It follows from \cite[Thm.3]{Lawson1}
that $\Sigma_*^h$ is an isomorphism.
Therefore,
the upper horizontal cycle class map is surjective
if and only if the lower horizontal cycle class map is surjective.
Meanwhile,
if the upper horizontal cycle class map is injective,
then both $\Sigma_*^c$ and the lower horizontal cycle class map is injective.

After tensoring with rational coefficients in Equation \eqref{eqn3.1},
we have
\begin{equation}\label{eqn3.2}
\vcenter{
\xymatrix{&\Ch_q(\mC_{p'}(\P^{n'}))_{\Q}\ar[r]^-{cl_{\Q}}\ar@{->>}[d]^{\Sigma_*^c}
&H_{2q}(\mC_{p'}(\P^{n'}),\Q)\ar[d]^{\Sigma_*^h}_{\cong}\\
&\Ch_q(\mC_{p'+1}(\P^{n'+1}))_{\Q}\ar[r]^-{cl_{\Q}}&H_{2q}(\mC_{p'+1}(\P^{n'+1}),\Q).
}
}
\end{equation}
By Lemma \ref{lemma3.6},
we have the upper horizontal cycle class map in Equation \eqref{eqn3.2} is an isomorphism for $p'=0$.
Now by induction on $p'$,
we get $\Sigma_*^c$ and hence $cl_{\Q}$ are isomorphisms all $0\leq p\leq n$ and $q\geq 0$.
This completes the proof of Theorem \ref{Thm3.1} and Theorem \ref{Thm2.1}.
\end{proof}

\begin{remark}
It should be cautious about the fact that the Chow group is NOT a homotopy invariant.
Therefore the Chow group $\Ch_q(T_{p+1,d}(\P^{n+1}))$ is in general not isomorphic to
$\Ch_q(C_{p,d}(\P^{n}))$,
although $T_{p+1,d}(\P^{n+1})$ is homotopy equivalent to $C_{p,d}(\P^{n})$.
The Chow group $\Ch_q(T_{p+1,d}(\P^{n+1}))$ never appears in the proof of our results.
As a comparison, such a homotopy equivalence is essential in Lawson's proof of the suspension theorem.

Actually, in the proof of Theorem \ref{Thm2.1},  we first show the surjectivity of
$$
\Sigma_*^c:\Ch_q(\mC_{p}(\P^{n}))_{\Q}\to \Ch_q(\mC_{p+1}(\P^{n+1}))_{\Q},
$$
and then we prove the injectivity of $\Sigma_*^c$ by induction on $p$
and the corresponding isomorphism of singular homology groups,
while the Lawson suspension theorem is indispensable.
\end{remark}

\section{Proof of Theorem \ref{Thm2.2} and Theorem \ref{Thm2.3}}

Recall that in the above section,
the map $F_{tD}$ was
constructed,
i.e.,
$$
F_{tD}:=(p_1)_*\circ \Psi_{tD}: C_{p+1,d}(\P^{n+1})\to C_{p+1,de}(\P^{n+1}).
$$
Moreover,
the image of $F_{tD}$ is in the Zariski open subset $T_{p+1,de}(\P^{n+1})$ when $D$ is not in $B_c$.
We can find such a $D$ when ${\rm codim}_{\C}B_c\geq \binom{p+e+1}{e}$ is positive.

Let $i_V:V\subset C_{p+1,d}(\P^{n+1})$ be a subvariety of dimension $q$.
Then, by Proposition \ref{Prop3.5},
the cycle $e\cdot V=F_{0D}(V)$ is rationally equivalent
to the algebraic cycle $F_D(V)$ supported in $ T_{p+1,de}(\P^{n+1})$.
Therefore, we have the following commutative diagram
\begin{equation}\label{eqn4.1}
\vcenter{
\xymatrix{
&T_{p+1,d}(\P^{n+1})\ar[r]\ar@{^(->}@{^(->}[d]&T_{p+1,de}(\P^{n+1})\ar@{^(->}[d]\\
V\ar@{^{(}->}[r]^-{i_V}&C_{p+1,d}(\P^{n+1})\ar[r]^-{F_{tD}}\ar[ur]^{F_D}& C_{p+1,de}(\P^{n+1}),
}
}
\end{equation}
where $F_D:=F_{1D}$.

\begin{lemma}\label{lemma4.1}
For any integer $e\geq 1$,
the map
$$
\phi_e: C_{0,d}(\P^{n})\to C_{0,de}(\P^{n}), ~ \phi_e(c)=e\cdot c
$$
induces isomorphisms
$\phi_{e*}: \Ch_q(C_{0,d}(\P^{n}))_{\Q}\to \Ch_q(C_{0,de}(\P^{n}))_{\Q}$ for $q\leq d$.
\end{lemma}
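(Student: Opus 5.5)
The plan is to transfer the statement to rational singular homology by Lemma \ref{lemma3.6}, and then prove the homological statement using the monoid structure of the infinite symmetric product. The cycle class map is natural for proper pushforward, so there is a commutative square
$$
\xymatrix{\Ch_q(\sp^d\P^n)_{\Q}\ar[r]^-{\phi_{e*}}\ar[d]^{cl_{\Q}}_{\cong}&\Ch_q(\sp^{de}\P^n)_{\Q}\ar[d]^{cl_{\Q}}_{\cong}\\
H_{2q}(\sp^d\P^n,\Q)\ar[r]^-{\phi_{e*}}&H_{2q}(\sp^{de}\P^n,\Q),}
$$
whose vertical arrows are isomorphisms by Lemma \ref{lemma3.6}. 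It therefore suffices to show that $\phi_e$ induces an isomorphism on $H_{2q}(-,\Q)$ for $q\leq d$.

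\textbf{Step 1: stability of the symmetric products.} Take $x_0\in\P^n$ as base point, form $\sp^\infty\P^n=\lim_d \sp^d\P^n$ along $c\mapsto c+x_0$, and let $j_d:\sp^d\P^n\to\sp^\infty\P^n$ be the resulting map. I will show that $j_{d*}:H_{2q}(\sp^d\P^n,\Q)\to H_{2q}(\sp^\infty\P^n,\Q)$ is an isomorphism for $q\leq d$. By Macdonald (or simply by taking $S_d$-invariants of the K\"unneth decomposition), $H_*(\sp^d\P^n,\Q)\cong \mathrm{Sym}^d H_*(\P^n,\Q)$. The degree $2q$ part has a basis of monomials $x_{a_1}\cdots x_{a_d}$, where $x_a$ is the class of $\P^a$ and $0\leq a_i\leq n$, $\sum a_i=q$. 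Stabilization multiplies each monomial by $x_0$. When $q\leq d$, every such monomial contains a factor $x_0$, so the monomials in degree $2q$ correspond bijectively to partitions of $q$ into parts $\leq n$ in every $\sp^{d'}\P^n$ with $d'\geq d$. Hence stabilization is bijective on bases, and $j_{d*}$ is an isomorphism in the stated range.

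\textbf{Step 2: the power map on $\sp^\infty\P^n$.} Let $\psi_e:\sp^\infty\P^n\to\sp^\infty\P^n$, $c\mapsto ec$. Since $x_0$ is the identity of the monoid, $j_{de}\circ\phi_e=\psi_e\circ j_d$ holds on the nose: both send $c$ to $ec$ plus multiples of $x_0$. The space $\sp^\infty\P^n$ is a connected commutative topological monoid, so $H_*(\sp^\infty\P^n,\Q)$ is a connected commutative and cocommutative Hopf algebra. By Milnor--Moore it is the polynomial algebra on its primitives, namely the images $y_1,\dots,y_n$ of $x_1,\dots,x_n$. The map $\psi_e$ factors as the iterated diagonal followed by the $e$-fold product, so it is an algebra endomorphism of the Pontryagin ring with $\psi_{e*}(y_a)=e\,y_a$. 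It therefore acts on a monomial of word length $k$ by multiplication by $e^k$, and is invertible over $\Q$.

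\textbf{Step 3: conclusion.} Combining Steps 1 and 2, for $q\leq d$ we get
$$
\phi_{e*}=j_{de*}^{-1}\circ\psi_{e*}\circ j_{d*},
$$
and here $j_{de*}$ is an isomorphism because $q\leq d\leq de$. So $\phi_{e*}$ is an isomorphism on $H_{2q}(-,\Q)$, and by the square above also on $\Ch_q(-)_{\Q}$.

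I expect the main obstacle to be Step 2, specifically the justification that the Pontryagin ring is generated by the primitives $y_a$ and that $\psi_e$ acts on them by $e$. If Milnor--Moore seems too heavy, I would instead compute $\psi_{e*}$ directly on the monomial basis of $\mathrm{Sym}^\bullet H_*(\P^n,\Q)$. There, $\psi_{e*}$ is a ring map preserving word length, and it is triangular with respect to the partition basis with diagonal entries $e^k$. That triangularity can be read off from the coproduct formula $\Delta x_a=\sum x_b\otimes x_{a-b}$, and it also yields invertibility.
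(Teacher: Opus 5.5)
Your reduction is the same as the paper's: both pass to rational homology through Lemma~\ref{lemma3.6}. You also make explicit the naturality square for $cl_{\Q}$ under the finite morphism $\phi_e$, which the paper leaves implicit. The difference is the homological input. The paper simply quotes Lemma 9 and Remark 10 of \cite{Hu-2015} for $H_{2q}(C_{0,d}(\P^n),\Q)\cong H_{2q}(C_{0,de}(\P^n),\Q)$, $q\le d$, without even saying that this isomorphism is $\phi_{e*}$. You instead prove it by factoring $\phi_{e*}=j_{de*}^{-1}\circ\psi_{e*}\circ j_{d*}$ through $\sp^\infty\P^n$.

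This buys a self-contained argument that separates two phenomena. The range $q\le d$ comes only from stabilization of symmetric products. The need for $\Q$ comes from $\psi_{e*}$, whose eigenvalues are $e^k$. For instance, $\phi_{e*}$ is multiplication by $e$ on $H_2(\sp^d\P^n,\Z)\cong\Z$ (for $n\ge1$). Since $cl$ sends the class of the curve $\{p+(de-1)x_0: p\in L\}$, for $L$ a line, to a generator, $\phi_{e*}$ cannot be surjective on integral $\Ch_1$ for $e\ge 2$. So your computation also shows that the integral expectation in the remark after the lemma fails.

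Two statements in your write-up are false as written, although the strategy survives.

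\textbf{Step 1.} The claim ``every such monomial contains a factor $x_0$'' fails for $q=d$: take $x_1^d$. What is true, and all you actually use, is the following. A degree-$2q$ monomial in $\mathrm{Sym}^{d'}H_*(\P^n,\Q)$ is determined by the partition of $q$ formed by its nonzero indices, and this partition has at most $q\le d'$ parts. Hence multiplication by $x_0$ is a bijection of monomial bases from $d'$ to $d'+1$ whenever $d'\ge q$.

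\textbf{Step 2.} The $y_a$ are not primitive for $a\ge 2$. Since $\P^n\to\sp^\infty\P^n$ is a map of spaces, $\Delta y_a=\sum_{b=0}^{a}y_b\otimes y_{a-b}$ with $y_0=1$, which is the very formula in your fallback. Consequently $\psi_{e*}(y_a)=\sum_{b_1+\cdots+b_e=a}y_{b_1}\cdots y_{b_e}$; for example, $\psi_{e*}(y_2)=e\,y_2+\binom{e}{2}y_1^2\neq e\,y_2$. So the claim $\psi_{e*}(y_a)=e\,y_a$ is wrong, and so is ``preserving word length'' in the fallback.

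The repair uses only the tools you already invoked. Milnor--Moore makes $H_*(\sp^\infty\P^n,\Q)$ polynomial on its actual primitives $p_a\equiv y_a$ modulo decomposables (e.g.\ $p_2=y_2-\tfrac12 y_1^2$ when $n\ge2$). The map $\psi_{e*}$ is the $e$-th convolution power of the identity, so it is multiplication by $e$ on primitives and by $e^k$ on monomials of length $k$ in the $p_a$. Equivalently, in the $y$-basis $\psi_{e*}$ sends a length-$k$ monomial to $e^k$ times itself plus terms of length $>k$. That is exactly the triangularity with diagonal $e^k$ you need for invertibility in each finite-dimensional degree.
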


\begin{proof}
First we note that $C_{0,d}(\P^n)\cong \sp^d(\P^n)$,
where $\sp^d(\P^n)$ denotes the $d$-th symmetric product of $\P^n$.
By Lemma \ref{lemma3.6},
we have
$$
\Ch_q(C_{0,d}(\P^{n}))_{\Q}\cong H_{2q}(C_{0,d}(\P^{n}))_{\Q}
$$ for all $d,q,n$.
By Lemma 9 and Remark 10 in \cite{Hu-2015},
we have
$$
 H_{2q}(C_{0,d}(\P^{n}))_{\Q}\cong  H_{2q}(C_{0,de}(\P^{n}))_{\Q}
$$
for $q\leq d$.
This completes the proof of the Lemma.
\end{proof}

\begin{remark}
We expect that
$\phi_{e*}: \Ch_q(C_{0,d}(\P^{n}))\to \Ch_q(C_{0,de}(\P^{n}))$
are isomorphisms for $q\leq d$
under the assumption of Lemma \ref{lemma4.1}.
\end{remark}

Note that there is a commutative diagram
\begin{equation}\label{eqn4.2}
\vcenter{
\xymatrix{C_{p,d}(\P^{n})\ar[r]^{\Phi_{p,d,n,e}}\ar[d]^{\Sigma}& C_{p,de}(\P^{n})\ar[d]^{\Sigma}\\
T_{p+1,d}(\P^{n+1})\ar[r]^{F_{tD}}& T_{p+1,de}(\P^{n+1}),
}
}
\end{equation}
where $\Phi_{p,d,n,e}$ ($\Phi_{0,d,n,e}=\phi_e$ in Lemma \ref{lemma4.1}) is the composed map
$$
\begin{array}{ccccc}
 C_{p,d}(\P^{n})&\to& C_{p,d}(\P^{n})\times\cdots \times C_{p,d}(\P^{n}) &\to& C_{p,de}(\P^{n})\\
 c&\mapsto& (c,...,c)&\mapsto& e\cdot c
\end{array}
$$
and $D\in \widetilde{Div_e}(\P^{n+2})$,
$F_{tD}$ is the restriction of
$F_{tD}: C_{p+1,d}(\P^{n+1})\to C_{p+1,de}(\P^{n+1})$
since its restriction on $T_{p+1,d}(\P^{n+1})$
is also in $T_{p+1,de}(\P^{n+1})$ (see \cite[Lemma 5.5]{Lawson1}).

%

\begin{proposition}\label{Prop4.3}
For integers $p,d,n$ such that $0\leq p\leq n$ and $d\geq 0$,
there is an integer $e_{p,d,n}\geq 1$ such that when $e\geq e_{p,d,n}$,
the induced map
$$
(\Phi_{p+1,d,n+1,e})_*: \Ch_q(C_{p+1,d}(\P^{n+1}))_{\Q}\to \Ch_q(C_{p+1,de}(\P^{n+1}))_{\Q}
$$
on Chow groups with rational coefficients  by
$$
\Phi_{p+1,d,n+1,e}:C_{p+1,d}(\P^{n+1})\to C_{p+1,de}(\P^{n+1}),c\mapsto e\cdot c
$$
are isomorphic for $0\leq q\leq d$.
Furthermore,
we have the following commutative diagram of isomorphisms
\begin{equation}\label{eqn4.3}
\vcenter{
\xymatrix{&\Ch_{q}(C_{p+1,d}(\P^{n+1}))_{\Q}\ar[r]^-{cl_{\Q}}_{\cong}\ar[d]^{(\Phi_{p+1,d,n+1,e})_*}_{\cong}
&H_{2q}(C_{p+1,d}(\P^{n+1}),\Q)\ar[d]^{(\Phi_{p+1,d,n+1,e})_*}_{\cong}\\
&\Ch_q(C_{p+1,de}(\P^{n+1}))_{\Q}\ar[r]^-{cl_{\Q}}_{\cong}&H_{2q}(C_{p+1,de}(\P^{n+1}),\Q)
}
}
\end{equation}
for $0\leq q\leq d$.
\end{proposition}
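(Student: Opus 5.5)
The proof is an induction on $p$, with Lemma \ref{lemma4.1} as the base case ($p=0$, where $\Phi_{0,d,n,e}=\phi_e$ on $\sp^d(\P^n)$). I will show the statement for $C_{p+1,d}(\P^{n+1})$ assuming it holds for $C_{p,d}(\P^{n})$, for all $n$ and $0\le q\le d$. Along the way I will also obtain that $cl_{\Q}$ is an isomorphism on $\Ch_q(C_{p+1,d}(\P^{n+1}))_{\Q}$ for $q\le d$, which gives the horizontal isomorphisms in \eqref{eqn4.3}. The topological input is that the right vertical map in \eqref{eqn4.3} is an isomorphism for $q\le d$. This follows from the homology computations in \cite{Hu-2015}, namely the stability of $H_{2q}$ under $c\mapsto e\cdot c$ in the range $q\le d$ (the analogue of Lemma 9 and Remark 10 there), combined with the suspension isomorphism $H_{2q}(C_{p,d}(\P^n),\Q)\cong H_{2q}(C_{p+1,d}(\P^{n+1}),\Q)$ in this range.

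\textbf{Surjectivity of $\Sigma_*$ at finite level.} Fix $d$ and $q\le d$. Choose $e_{p,d,n}$ so that $\binom{p+e+1}{e}>q+1$ for all $e\ge e_{p,d,n}$ and all $q\le d$; this is possible since the binomial coefficient grows with $e$. Let $V\subset C_{p+1,d}(\P^{n+1})$ be a $q$-dimensional subvariety. By Proposition \ref{Prop3.5}, $e\cdot V=(\Phi_{p+1,d,n+1,e})_*V$ is rationally equivalent to $F_D(V)$, which is supported in $T_{p+1,de}(\P^{n+1})$. By Proposition \ref{Prop3.2}, this cycle is in turn rationally equivalent to a cycle supported in $\Sigma C_{p,de}(\P^n)$. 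Hence the image of $(\Phi_{p+1,d,n+1,e})_*$ lies in the image of $\Sigma_*:\Ch_q(C_{p,de}(\P^n))_{\Q}\to\Ch_q(C_{p+1,de}(\P^{n+1}))_{\Q}$. By the inductive hypothesis, $\Ch_q(C_{p,de}(\P^n))_{\Q}$ is identified via $cl_{\Q}$ with $H_{2q}(C_{p,d}(\P^n),\Q)$, which is finite dimensional.

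\textbf{Isomorphism of $(\Phi_{p+1,d,n+1,e})_*$.} For injectivity, consider the square \eqref{eqn4.3}. Its right column is an isomorphism, so $cl_{\Q}\circ(\Phi_{p+1,d,n+1,e})_*$ is injective as soon as $cl_{\Q}$ is injective on the source, and this is the step that needs care. I would instead get injectivity by composing with the suspension. Every class in $\Ch_q(C_{p+1,de}(\P^{n+1}))_{\Q}$ coming from $\Sigma$ of the lower Chow variety has its cycle class computed through the inductive isomorphism $cl_{\Q}$ on $C_{p,de}(\P^n)$ and the suspension isomorphism in homology. So the map $\Sigma_*\circ cl_{\Q}^{-1}$, which sends $H_{2q}(C_{p,de}(\P^n),\Q)$ into $\Ch_q(C_{p+1,de}(\P^{n+1}))_{\Q}$, splits $cl_{\Q}$ on the image of $(\Phi_{p+1,d,n+1,e})_*$. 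To apply this to all of the source, I iterate: the same argument with $d$ replaced by $de$ shows $e\cdot$ maps $\Ch_q(C_{p+1,de})_{\Q}$ into the suspension image as well. Composing, $(\Phi_{p+1,d,n+1,e^2})_*=(\Phi_{p+1,de,n+1,e})_*\circ(\Phi_{p+1,d,n+1,e})_*$ factors through the finite-dimensional space $H_{2q}(C_{p,d}(\P^n),\Q)$ compatibly with $cl_{\Q}$. From this factorization, a class killed by $\Phi_*$ is killed by $cl_{\Q}$. For surjectivity, I use the diagram \eqref{eqn4.2} with $\Sigma$ and the inductive isomorphism $(\Phi_{p,d,n,e})_*$ downstairs: every element in the suspension image of $C_{p,de}(\P^n)$ equals $\Sigma_*(\Phi_{p,d,n,e})_*(x)=(F_{tD})_*\Sigma_*(x)$, and $(F_{tD})_*$ equals $(\Phi_{p+1,d,n+1,e})_*$ on Chow groups because $F_{tD}$ is a rational equivalence in $t$.

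\textbf{Main obstacle.} The hardest step is injectivity, i.e.\ showing that a cycle $Z$ on $C_{p+1,d}(\P^{n+1})$ with $cl_{\Q}(Z)=0$ is rationally trivial. Before multiplication by $e$, nothing forces $Z$ into the suspension locus, and Chow groups are not homotopy invariant. My first attempt will be the following: note that $\Phi_*$ is injective if and only if its kernel vanishes. By the factorization above, $\ker \Phi_*$ is contained in $\ker cl_{\Q}$. I then need to show $\ker cl_{\Q}=0$ on the source, which I would deduce by using $\Phi$ for the pair $(d/e,\dots)$ when possible. Failing that, I would argue via the commutative square with $\Theta$, the map adding $(e-1)$ fixed copies of $l_0$, which is rationally homotopic to $\Phi$ by connecting $c$ to $l_0$ inside $C_{p+1,d}$.
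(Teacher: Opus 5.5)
There is a genuine gap, and it is wider than the one you flag: the surjectivity half is not established either, and both halves reduce to the same missing statement. Write $\Sigma_*$ for $\Ch_q(C_{p,de}(\P^n))_{\Q}\to\Ch_q(C_{p+1,de}(\P^{n+1}))_{\Q}$, and take $e\ge e_{p,d,n}$, $q\le d$. Your steps, together with Lawson's suspension theorem in homology, give three things.

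(a) $\mathrm{im}\,(\Phi_{p+1,d,n+1,e})_*\subseteq\mathrm{im}\,\Sigma_*$, by Propositions \ref{Prop3.5} and \ref{Prop3.2}.

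(b) $\mathrm{im}\,\Sigma_*\subseteq\mathrm{im}\,(\Phi_{p+1,d,n+1,e})_*$, from $\Sigma\circ\Phi_{p,d,n,e}=\Phi_{p+1,d,n+1,e}\circ\Sigma$ and the inductive hypothesis.

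(c) $cl_{\Q}$ is injective on $\mathrm{im}\,\Sigma_*$, and $cl_{\Q}$ maps $\Ch_q(C_{p+1,d}(\P^{n+1}))_{\Q}$ onto $H_{2q}(C_{p+1,d}(\P^{n+1}),\Q)$.

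With the right column of \eqref{eqn4.3}, this yields exactly $\ker(\Phi_{p+1,d,n+1,e})_*=\ker cl_{\Q}$ on $\Ch_q(C_{p+1,d}(\P^{n+1}))_{\Q}$. It also yields $\Ch_q(C_{p+1,de}(\P^{n+1}))_{\Q}=\mathrm{im}\,(\Phi_{p+1,d,n+1,e})_*\oplus\ker cl_{\Q}$. So injectivity (resp.\ surjectivity) of $(\Phi_{p+1,d,n+1,e})_*$ is \emph{equivalent} to injectivity of $cl_{\Q}$ on $\Ch_q(C_{p+1,d'}(\P^{n+1}))_{\Q}$ for $d'=d$ (resp.\ $d'=de$). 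You must show a homologically trivial $q$-cycle is rationally trivial \emph{before} any multiplication by $e$. The moving argument only ever controls $(\Phi_{e'})_*Z$, which lives on $C_{p+1,d'e'}(\P^{n+1})$. In particular, your paragraph headed ``Surjectivity of $\Sigma_*$ at finite level'' proves (a), not that $\Sigma_*$ is onto. Without that, (b) gives no surjectivity.

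None of your repairs reaches this statement.

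The factorization of $(\Phi_{p+1,d,n+1,e^2})_*$ through $H_{2q}(C_{p,d}(\P^n),\Q)$ shows that $\Phi_*$ factors through $cl_{\Q}$. That gives $\ker cl_{\Q}\subseteq\ker\Phi_*$, which is the inclusion you do not need. Also, $\Ch_q(C_{p+1,d}(\P^{n+1}))_{\Q}$ is not known to be finite dimensional, so no dimension count is available.

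Passing to $d/e$ is unavailable for general $d$. In any case it would only concern classes pushed forward from $C_{p+1,d/e}(\P^{n+1})$ with $q\le d/e$.

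To land in $C_{p+1,de}(\P^{n+1})$, the map $\Theta$ must add $(e-1)d$ copies of $l_0$. Then it is not even homotopic to $\Phi_{p+1,d,n+1,e}$, let alone rationally equivalent to it. For divisors, $C_{p+1,d}(\P^{p+2})$ is a projective space and $\Phi_{p+1,d,p+2,e}$ is $[f]\mapsto[f^e]$, acting on $\Ch_k\cong\Z$ by $e^k$. Meanwhile $\Theta$, $[f]\mapsto[f\ell^{(e-1)d}]$, is a linear embedding acting by $1$. Moreover, injectivity of $\Theta_*$ is the stability of Theorem \ref{Thm2.2}, which the paper derives from this very proposition.

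Nor can you import the paper's treatment of this step. Its surjectivity rests on the claim that Theorem \ref{Thm3.1} gives a finite-level isomorphism $\Sigma_*$ at degree $de$. That presupposes injectivity of $\Ch_q(C_{p+1,de}(\P^{n+1}))_{\Q}\to\Ch_q(\mC_{p+1}(\P^{n+1}))_{\Q}$, i.e.\ the stability being proved. Its injectivity rests on the line $e\cdot[V]=[e\cdot V]=(F_{0D})_*[V]$, which reads pushforward along $c\mapsto e\cdot c$ as multiplication by the integer $e$. That is the mechanism on homotopy groups, where $c\mapsto e\cdot c$ is the $e$-th power map of the H-space $\mC_{p+1}(\P^{n+1})$. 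On Chow groups it is simply not true: the divisor example gives $e^k$ on $\Ch_k$. Nothing in the paper supplies a structure on rational Chow groups that would make $(\Phi_e)_*$ act invertibly. Closing the gap needs a genuinely new input, such as an independent proof that $cl_{\Q}$ is injective on $\Ch_q(C_{p+1,d'}(\P^{n+1}))_{\Q}$ for $q\le d'$.
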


\begin{proof}
We first prove the isomorphic property of $(\Phi_{p+1,d,n+1,e})_*$ by induction on $p$.
The case that $p=-1$ follows from Lemma \ref{lemma4.1}.
Assume that
$\Phi_{p,d,n,e}: C_{p,d}(\P^{n})\to C_{p,de}(\P^{n})$
defined by $\Phi_{p,d,n,e}(c)=e\cdot c$ induces isomorphisms
$(\Phi_{p,d,n,e})_*: \Ch_q(C_{p,d}(\P^{n}))_{\Q}\to \Ch_q(C_{p,de}(\P^{n}))_{\Q}$
for $0\leq q\leq d$ and $e\geq e_{p,d,n}$.

By Equation \eqref{eqn4.1} and Equation \eqref{eqn4.2},
we have a commutative diagram
\begin{equation*}
\vcenter{
\xymatrix{&C_{p,d}(\P^{n})\ar[r]^{\Phi_{p,d,n,e}}\ar[d]^{\Sigma}& C_{p,de}(\P^{n})\ar[d]^{\Sigma}\\
&T_{p+1,d}(\P^{n+1})\ar[r]\ar@{^(->}@{^(->}[d]& T_{p+1,de}(\P^{n+1})\ar@{^(->}[d]\\
V\ar@{^{(}->}[r]^-{i_V}&C_{p+1,d}(\P^{n+1})\ar[r]^-{F_{0D}}\ar[ur]^{F_D}& C_{p+1,de}(\P^{n+1}).
}
}
\end{equation*}
Therefore, we get the following commutative diagram on Chow groups
\begin{equation}\label{eqn4.4}
\vcenter{
\xymatrix{&\Ch_q(C_{p,d}(\P^{n}))_{\Q}\ar[r]^{(\Phi_{p,d,n,e})_*}\ar[d]^{\Sigma_*}& \Ch_q(C_{p,de}(\P^{n}))_{\Q}\ar[d]^{\Sigma_*}\\
&\Ch_q(C_{p+1,d}(\P^{n+1}))_{\Q}\ar[r]^-{(F_{0D})_*}& \Ch_q(C_{p+1,de}(\P^{n+1}))_{\Q}.
}
}
\end{equation}
By induction, the upper horizontal map $(\Phi_{p,d,n,e})_*$ is an isomorphism.
Moreover, by Theorem \ref{Thm3.1},
$\Sigma_*: \Ch_q(\mC_{p}(\P^{n}))_{\Q}\to \Ch_q(\mC_{p+1}(\P^{n+1}))_{\Q}$
is an isomorphism for $0\leq q\leq d$,
then we get isomorphism $\Sigma_*: \Ch_q(C_{p,de}(\P^{n}))_{\Q}\to \Ch_q(C_{p+1,de}(\P^{n+1}))_{\Q}$
for $e$ sufficient large.
Hence the left vertical map $\Sigma_*$ is injective and the lower horizontal map
$(F_{0D})_*$ is surjective.

In fact, we can prove the surjective property of $(F_{0D})_*$ directly.
For $\alpha\in \Ch_q(C_{p+1,d}(\P^{n+1}))_{\Q}$,
it is linear combination of the class of (irreducible) subvariety of dimension $q$
on $C_{p+1,d}(\P^{n+1})$.
It is enough to consider that $\alpha$ is the cycle class of a subvariety $V$.
Let $\alpha\in \Ch_q(C_{p+1,d}(\P^{n+1}))_{\Q}$ be an element such that $\alpha=[V]$ is the cycle class
of a subvariety $V$ of dimension $q$ on $C_{p+1,d}(\P^{n+1})$.
Let $i_V:V\to C_{p+1,d}(\P^{n+1})$ be the inclusion.
To prove that $(F_{0D})_*$ is surjective,
we need to show that for large $e$,
there is a cycle $V'$ on $C_{p,de}(\P^{n})$
such that $\Sigma_*([V'])=(F_{0D})_*([V])\in \Ch_q(C_{p+1,de}(\P^{n+1}))$.
Indeed,
by Proposition \ref{Prop3.5},
$F_{0D}(V)$ is rationally equivalent to
$F_{D}(V)\subset T_{p+1,de}(\P^{n+1})$ on $C_{p+1,de}(\P^{n+1})$ for $e\geq e_{p+1,d,n+1}$.
Meanwhile,
$F_{D}(V)$  is rationally equivalent
to an algebraic cycle $\Sigma V'\subset \Sigma C_{p,de}(\P^{n})$ on $C_{p+1,de}(\P^{n+1})$.

Since $e\cdot [V]= [e\cdot V]=[F_{0D}(V)]=(F_{0D})_*([V])\in \Ch_q(C_{p+1,de}(\P^{n+1}))$,
we obtain that $(F_{0D})_*$ is injective for $e$ sufficient large (cf. Notes 5.1 in \cite{Lawson1} for homotopy groups).
Therefore, for sufficient large $e$ ,
$$
(\Phi_{p+1,d,n+1,e})_* =(F_{0D})_*:
\Ch_q(C_{p+1,d}(\P^{n+1}))_{\Q}\to \Ch_q(C_{p+1,de}(\P^{n+1}))_{\Q}
$$
are isomorphic for $0\leq q\leq d$.

Now we have shown  the left vertical map in the following commutative diagram
\begin{equation*}
\vcenter{
\xymatrix{&\Ch_{q}(C_{p+1,d}(\P^{n+1}))_{\Q}\ar[r]^-{cl_{\Q}}\ar[d]^{(\Phi_{p+1,d,n+1,e})_*}_{\cong}
&H_{2q}(C_{p+1,d}(\P^{n+1}),\Q)\ar[d]^{(\Phi_{p+1,d,n+1,e})_*}_{\cong}\\
&\Ch_q(C_{p+1,de}(\P^{n+1}))_{\Q}\ar[r]^-{cl_{\Q}}&H_{2q}(C_{p+1,de}(\P^{n+1}),\Q)
}
}
\end{equation*}
 is an isomorphism for $0\leq q\leq d$, while the right vertical map
 is also an isomorphism (see \cite{Hu-2015}).
 When $e\to \infty$,  we obtain the following diagram
\begin{equation}\label{eqn4.5}
\vcenter{
\xymatrix{&\Ch_{q}(C_{p+1,d}(\P^{n+1}))_{\Q}\ar[r]^-{cl_{\Q}}\ar[d]_{\cong}
&H_{2q}(C_{p+1,d}(\P^{n+1}),\Q)\ar[d]_{\cong}\\
&\Ch_q(\mC_{p+1}(\P^{n+1}))_{\Q}\ar[r]^-{cl_{\Q}}&H_{2q}(\mC_{p+1}(\P^{n+1}),\Q)
}
}
\end{equation}
for $0\leq q\leq d$. By Theorem \ref{Thm2.1}, the lower horizontal map in
Equation \eqref{eqn4.5} is an isomorphism. Therefore, the upper horizontal map is an isomorphism
for $0\leq q\leq d$.
This completes the proof of the Proposition.
\end{proof}

\begin{proof}[Proof of Theorem \ref{Thm2.2}]
By Proposition \ref{Prop4.3},
we have isomorphisms
$$
(\Phi_{p+1,d,n+1,e})_*: \Ch_q(C_{p+1,d}(\P^{n+1}))_{\Q}\to \Ch_q(C_{p+1,de}(\P^{n+1}))_{\Q}
$$
for $0\leq q\leq d$, $0\leq p\leq n$ and $e$ large.
Let $e\to \infty$, we get isomorphisms
$$
\Ch_q(C_{p+1,d}(\P^{n+1}))_{\Q}\to \Ch_q(\mC_{p+1}(\P^{n+1}))_{\Q}.
$$
for $0\leq q\leq d$, $0\leq p\leq n$.
Sine $i:C_{p+1,d}(\P^{n+1}))\to C_{p+1,d+1}(\P^{n+1}))$ commutes with  $\Phi_{p+1,*,n+1,e}$,
we get the following commutative diagram of Chow groups
\begin{equation*}
\xymatrix{&\Ch_q(C_{p,d}(\P^{n}))_{\Q}\ar[r]^{\cong}\ar[d]^{i_*}& \Ch_q(\mC_{p}(\P^{n}))_{\Q}\\
&\Ch_q(C_{p,d+1}(\P^{n}))_{\Q}\ar[ru]^-{\cong}& .
}
\end{equation*}
for $0\leq q\leq d$ and $1\leq p\leq n$.
This shows that
$$
i_*:\Ch_q(C_{p,d}(\P^{n}))_{\Q}\to \Ch_q(C_{p,d+1}(\P^n))_{\Q}
$$
are isomorphisms for $0\leq q\leq d$ and $1\leq p\leq n$.

For $p=0$, $i_*:\Ch_q(C_{0,d}(\P^{n}))_{\Q}\to \Ch_q(C_{0,d+1}(\P^n))_{\Q}$
is an isomorphism for $0\leq q\leq d$
since by Lemma \ref{lemma3.6},
the cycle class map is a natural isomorphism and
$i_*:H_{2q}(C_{0,d}(\P^{n}))_{\Q}\to H_{2q}(C_{0,d+1}(\P^n),\Q)$
is also an isomorphism for $0\leq q\leq d$.
This completes the proof of the theorem.
\end{proof}

\begin{proof}[Proof of Theorem \ref{Thm2.3}]
In the commutative diagram \eqref{eqn4.4},
we obtain that the left vertical map
$$
\Sigma_*:\Ch_q(C_{p,d}(\P^{n}))_{\Q}\to \Ch_q(C_{p+1,d}(\P^{n+1}))_{\Q}
$$
are isomorphic for $0\leq q\leq d$
from the facts that all other three maps are isomorphisms,
as shown in the proof of Proposition \ref{Prop4.3}.
This completes the proof of Theorem \ref{Thm2.3}.
\end{proof}

\begin{proof}[Proof of Corollary \ref{cor2.4} ]
It follows immediately
from the upper horizontal isomorphism in Equation \eqref{eqn4.3} in Proposition \ref{Prop4.3}.
\end{proof}


\section{Application to cycles of fixed degree}
In this section, we apply our result to the space of all algebraic cycles of a fixed degree.
Let $d\geq 1$ be a fixed integer.
Consider the spaces
\begin{equation}\label{eqn5.1}
\D(d):=\lim_{p,q\to \infty} C_{p,d}(\P^{p+q})
\end{equation}
of cycles of a fixed degree (with arbitrary dimension and codimension),
as introduced in \cite{LM},
where  the limit for $p$ is given by suspension $\Sigma:C_{p,d}(\P^n)\to C_{p+1,d}(\P^{n+1}) $
and the limit for $q$ is induced by the inclusion $\P^{p+q}\subset \P^{p+q+1}$,
i.e. a $p$-cycle in $C_{p,d}(\P^{p+q})$ is viewed as a $p$-cycle in $C_{p,d}(\P^{p+q+1})$.

Then there is a filtration (see \cite[\S7]{LM})
$$
BU=\D(1)\subset\D(2)\subset\cdots\subset \D(\infty),
$$
where $BU=\lim\limits_{q\to \infty}BU_q$,
$\D(\infty)$ is homotopic to $K(even,\Z)=\prod_{i=1}^{\infty} K(2i,\Z)$
 (the weak product of Eilenberg-Maclane spaces).

The inclusion map $\D(d)\subset \D(\infty)$ induces maps on Chow groups.
It is an easy corollary of Theorem \ref{Thm2.2}
that $\D(1)\subset \D(\infty)$ induces an injective map on Chow groups
and surjective on Chow group with rational coefficients.

The following question is a natural analogous of its homological version.
\begin{question}\label{ques1}
Is $\Ch_*(\D(d))\to \Ch_*(\D(\infty))$ injective for all $d\geq 1$?
\end{question}

The homotopic version of the above question was proposed in \cite{LM},
which was answered negatively in \cite{Hu-Deg2} for $d=2$
through an explicit calculation of homology groups of $\D(2)$.
It is too optimistic to get a positive answer to Question \ref{ques1}.

However, we have the following result.
\begin{theorem}
The induced map on the Chow groups
$$
i_*:\Ch_q(\D(d))_{\Q}\to \Ch_q(\D(\infty))_{\Q}
$$
from the inclusion
$i:\D(d)\subset \D(\infty)$
is an isomorphism for $q\leq d$.
\end{theorem}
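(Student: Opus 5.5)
Since $\D(d)$ and $\D(\infty)$ are not varieties, we first fix what their Chow groups mean, in the same way $\Ch_q(\mC_p(\P^n))$ was defined. We set
$$
\Ch_q(\D(d)):=\lim_{p,m\to\infty}\Ch_q(C_{p,d}(\P^{p+m})),
$$
with transition maps given by the suspension $\Sigma_*$ and by the push-forward along the inclusion $\P^{p+m}\subset\P^{p+m+1}$. For $\Ch_q(\D(\infty))$ we take the further direct limit over $d$, using the embeddings $i_*$ of \eqref{eqn2.1}. Both limits commute with $\otimes\Q$, so it suffices to work with rational coefficients at each finite stage. The plan is to reduce to Theorem \ref{Thm2.2} at every finite stage and then pass to the limits.

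\textbf{Step 1.} Fix $p,m$ and put $n=p+m$. By Theorem \ref{Thm2.2}, the map $i_*:\Ch_q(C_{p,d'}(\P^{n}))_\Q\to \Ch_q(C_{p,d'+1}(\P^{n}))_\Q$ is an isomorphism whenever $q\le d'$. Iterating over $d'=d,d+1,\dots$, the hypothesis $q\le d'$ holds at every step once it holds at $d'=d$. Hence $\Ch_q(C_{p,d}(\P^n))_\Q\to \Ch_q(\mC_p(\P^n))_\Q$ is an isomorphism for $q\le d$.

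\textbf{Step 2.} We check that these isomorphisms are compatible with the transition maps, so that they assemble into a single isomorphism of direct systems. Compatibility with the suspension $\Sigma$ holds because $\Sigma$ commutes with $i$, as noted in \S3 (with the base cycle $l_0$ replaced by $\Sigma l_0$). Compatibility with the inclusion $\P^{n}\subset\P^{n+1}$ is clear, since $l_0\subset\P^n$ is sent to a linear space. The inclusion $\D(d)\subset\D(\infty)$ then induces, on rational Chow groups, the direct limit over $(p,m)$ of the isomorphisms from Step 1. A direct limit of isomorphisms of directed systems is an isomorphism, so $i_*$ is an isomorphism for $q\le d$. The same conclusion also follows from Theorem \ref{Thm2.3} together with Corollary \ref{cor2.4}: the transition maps are eventually isomorphisms, and both sides identify with $H_{2q}(\D(\infty),\Q)$ in this range.

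\textbf{Main obstacle.} The main difficulty is the order of limits. The definition of $\Ch_q(\D(\infty))$ may take the limit over $d$ after or before the limits over $p$ and $m$, and we must check that the two orders agree. They do, because all three families of transition maps commute, which gives a commuting double (in fact triple) direct system, and colimits commute with colimits. We must also be careful about what $i$ is when the base linear cycle $l_0$ varies with $p$ and $n$. We will address this by choosing $l_0$ compatibly with the flags $\P^n\subset\P^{n+1}$ and with the suspension, so that all the squares of maps commute on the nose and not only up to rational equivalence.
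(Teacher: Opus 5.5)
Your proposal is correct and follows the paper's own route; the paper's proof is just the remark that the statement follows from Theorem \ref{Thm2.2} by passing to the limit $p,n\to\infty$. You have made that limit explicit: you define $\Ch_q(\D(d))$ as a direct limit, iterate Theorem \ref{Thm2.2} to get $\Ch_q(C_{p,d}(\P^n))_{\Q}\cong\Ch_q(\mC_p(\P^n))_{\Q}$ for $q\le d$, choose $l_0$ compatibly with suspension and with the inclusions $\P^n\subset\P^{n+1}$, and interchange the colimits.
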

\begin{proof}
The result follows from Theorem \ref{Thm2.2} by taking the limit
$p,n\to \infty$.
\end{proof}

\begin{remark}
In general,
the Chow group of a complex algebraic variety is not a homotopy invariant.
Hence the direct limit $\lim\limits_{n\to \infty}\Ch_*(V^n)$
for a direct system $i_n:V^n\to V^{n+1}$ of morphisms
$i_n$ between algebraic varieties is not a homotopy invariant, either.
$D(\infty)$ is just one of the models whose homotopy type is $K(even,\Z)$.
In this model of  $K(even,\Z)$,
the Chow groups with rational coefficients coincide with the corresponding rational homology group.

It is not hard to find  another choice of the model of $K(even,\Z)$ whose Chow groups is
different from the corresponding rational homology group.
This implies that the Chow group of Eilenberg-MacLane spaces, in general, is not well defined.
Note that Eilenberg-MacLane spaces are classifying spaces of topological abelian groups.
This is compared to the definition of Chow group of the classifying space of any linear algebraic group by  Totaro (see \cite{Totaro}).
\end{remark}

\end{document}